\documentclass[11pt,oneside,reqno]{amsart}

\pdfoutput=1
\usepackage{amssymb}
\usepackage{algpseudocode}
\usepackage{algorithm}
\usepackage{verbatim}
\usepackage{graphicx}
\usepackage{placeins}
\usepackage{listings}
\usepackage{xcolor}
\usepackage{subcaption}
\usepackage[noadjust]{cite}
\RequirePackage{dsfont} \setlength{\textwidth}{15.5cm}
\setlength{\textheight}{23.0cm} \setlength{\voffset}{-1.5cm}
\setlength{\hoffset}{-1.5cm} \addtolength{\headheight}{3.5pt}
\frenchspacing \scrollmode
\allowdisplaybreaks
\usepackage{graphicx}
\usepackage{epstopdf}
\usepackage{hyperref}
\usepackage{amsmath}
\usepackage{tikz}

\makeatletter
\newcommand{\xleftrightarrow}[2][]{\ext@arrow 3359\leftrightarrowfill@{#1}{#2}}
\makeatother

\definecolor{codegreen}{rgb}{0,0.6,0}
\definecolor{codegray}{rgb}{0.5,0.5,0.5}
\definecolor{codepurple}{rgb}{0.58,0,0.82}
\definecolor{backcolour}{rgb}{0.95,0.95,0.92}

\lstdefinestyle{list_style}{
  backgroundcolor=\color{backcolour}, commentstyle=\color{codegreen},
  keywordstyle=\color{magenta},
  numberstyle=\tiny\color{codegray},
  stringstyle=\color{codepurple},
  basicstyle=\ttfamily\footnotesize,
  breakatwhitespace=false,         
  breaklines=true,                 
  captionpos=b,                    
  keepspaces=true,                 
  numbers=left,                    
  numbersep=5pt,                  
  showspaces=false,                
  showstringspaces=false,
  showtabs=false,                  
  tabsize=2
}

\lstset{style=list_style}

\newcommand{\xdasharrow}[2][->]{
\tikz[baseline=-\the\dimexpr\fontdimen22\textfont2\relax]{
\node[anchor=south,font=\scriptsize, inner ysep=1.5pt,outer xsep=2.2pt](x){#2};
\draw[shorten <=3.4pt,shorten >=3.4pt,dashed,#1](x.south west)--(x.south east);
}
}


\newcommand{\DEBUG}{}

\ifdefined\DEBUG%

  \def\rem#1{{\marginpar{\raggedright\scriptsize #1}}}
  \newcommand{\pmr}[1]{\rem{\color{blue}{$\bullet$ #1}}}
  \newcommand{\ppr}[1]{\rem{\color{red}{$\bullet$ #1}}}
 \else%

  \newcommand{\ppr}[1]{}
  \newcommand{\pmr}[1]{}
 \fi

\newcommand{\R}{{\mathbb R}}

\newcommand{\N}{{\mathbb N}}

\newcommand{\one}{\cdot\mathds{1}}

\newcommand{\cost}{\operatorname{cost}}

\newcommand{\euler}{{\widetilde{X}_{M,n}^E}}
\newcommand{\xstep}{X_{M_n^*,k_n^*}^{step}}
\newcommand{\xalg}{{\overline{X}_{M_n,\bar{k}_n}}}

\newcommand{\cala}{{\mathcal A}}

\def\rd{\,{\mathrm d}}

\theoremstyle{plain}
\newtheorem{theorem}{Theorem}
\newtheorem{lemma}{Lemma}
\newtheorem{fact}{Fact}

\newtheorem{proposition}{Proposition}

\theoremstyle{definition}
\newtheorem{remark}{Remark}


\begin{document}

\title
[Global approximation of SDEs driven by countably dimensional Wiener process]
{Adaptive step-size control for global approximation of SDEs driven by countably dimensional Wiener process}

\author[{\L}. St\c{e}pień]{{\L}ukasz St\c{e}pie\'{n}}
\address{AGH University of Science and Technology,
	Faculty of Applied Mathematics,
	Al. A.~Mickiewicza 30, 30-059 Krak\'ow, Poland}
\email{lstepie@agh.edu.pl, corresponding author}

\begin{abstract}
In this paper we deal with global approximation of solutions of stochastic differential equations (SDEs) driven by countably dimensional Wiener process. Under certain regularity conditions imposed on the coefficients, we show lower bounds for exact asymptotic error behaviour. For that reason, we analyse separately two classes of admissible algorithms: based on equidistant, and possibly not equidistant meshes. Our results indicate that in both cases, decrease of any method error requires significant increase of the cost term, which is illustrated by the product of cost and error diverging to infinity. This is, however, not visible in the finite dimensional case. In addition, we propose an implementable, path-independent Euler algorithm with adaptive step-size control, which is asymptotically optimal among algorithms using specified truncation levels of the underlying Wiener process. Our theoretical findings are supported by numerical simulation in Python language.

\textbf{Key words:} stochastic differential equations, countably dimensional Wiener process, adaptive step-size control, minimal error bounds, Information--based Complexity, asymptotically optimal algorithm
\newline
\newline
\textbf{MSC 2010:} 65C30, 68Q25
\end{abstract}
\maketitle

\section{Introduction}
We investigate global approximation of solutions of the following stochastic differential equations 
\begin{equation}
\label{main_equation}
	\left\{ \begin{array}{ll}
	\displaystyle{
	\rd X(t) = a(t,X(t))\rd t + \sigma(t) \rd W(t) , \ t\in [0,T]},\\
	X(0)=x_0, 
	\end{array} \right.
\end{equation}
where $T >0$, $W(t) = [W_1(t), W_2(t), \ldots]^T$ is a sequence of independent scalar Wiener processes on the probability space with sufficiently rich filtration $(\Omega, \mathcal{F}, \mathbb{P}, \big(\mathcal{F}_t)_{t\in[0,T]}\big),$ and $x_0 \in \mathbb{R}.$ For suitable, regular coefficients $a, \sigma,$ the uniqueness of the solution $X=X(t),$ and its finite second-order moments can be assured; see \cite{applebaum, CohEl, Krylov1, PPMSLS} where more general models were considered.

Recently, global approximation of solutions of SDEs driven by finite-dimensional Wiener process has been studied extensively in the literature. In particular, the algorithms with step-size were introduced in \cite{Muller, Muller2, AKPP, PP1, PP2}. Generally, the time-step adaptation linked to the equation coefficients instead of leveraging equidistant mesh can significantly decrease the asymptotic constant for the method error in the finite dimensional models. On the other hand, SDEs driven by countably dimensional Wiener noise can be found in \cite{zabczyk, Cao, Liang06}, while their applications - e.g., in \cite{carmona, PBL}. Nonetheless, there are still few papers referring to the exact error behaviour and optimality issues for such SDEs in global setting. For instance, in \cite{SMST} authors developed an Euler algorithm and estimated its global error for $X$ being countably dimensional. However, the assumptions were relatively strong, and the proposed algorithm was non-implementable due to infinite dimension of the solution. 

In this paper we extend some asymptotic results for global approximation from \cite{Muller, AKPP} to SDEs with countably dimensional noise structure. To that end, we utilise solution moment bounds and approximation strategy presented in \cite{PPMSLS}, where a pointwise setting was investigated. In our setting, error of an algorithm $\cala$ returning a process $Y = (Y(t))_{t\in[0,T]}$ is measured in the norm $\|\cdot\|_2$ defined as follows
\begin{equation*}
    \|X-Y\|_2 = \left(\mathbb{E}\int_0^T |X(t)-Y(t)|^2 \rd t \right)^{1/2}.
\end{equation*}

Under suitable conditions imposed on the model coefficients, we analyse asymptotic exact error behaviour in two classes $\chi_{eq}, \chi_{noneq}$ of admissible algorithms leveraging only finite dimensional evaluations of the process~$W;$ we refer to \cite{Hein1, Muller, AK22, AKPP, NOV, PPVSMS2, TWW88} where similar approach for generic error analysis was developed. In our setting, permitted truncation levels are determined by non-decreasing sequences leveraging information about how fast $\sigma$ entries vanish. In Theorem \ref{th:AC_lower} we show that for any fixed, admissible truncation level sequence $\bar{M}=(M_n)_{n=1}^{\infty}$, an exact asymptotic behaviour of the cost-error relation satisfies
\begin{equation}\label{eq:AC_noneq}
    \big(\cost{(\xalg)}\big)^{1/2}\,\big\|\xalg - X\big\|_{2} \gtrapprox \frac{M_n^{1/2}}{\sqrt{6}}\int_{0}^T \|\sigma(t)\|_{\ell^2}\rd t, 
\end{equation}
irrespective of the choice of admissible method $(\xalg)_{n\in\mathbb{N}}\in \chi_{noneq}^{\bar{M}}$ based on (possibly) non-equidistant mesh with a suitable number of nodes $\bar{k_n}+1,$ and such that $\xalg$ utilises discrete information from $M_n, n\in\mathbb{N},$ first coordinates of $W.$
When we limit ourselves to methods $\chi_{eq}^{\bar{M}}$ based on equidistant partitions of the interval $[0,T],$ we get
\begin{equation}\label{eq:AC_eq}
    \big(\cost{(\xalg)}\big)^{1/2}\,\big\|\xalg - X\big\|_{2} \gtrapprox M_n^{1/2}\sqrt{\frac{T}{6}}\biggr(\int_{0}^T \|\sigma(t)\|_{\ell^2}^2\rd t \biggr)^{1/2}.
\end{equation}
We hereinafter use the notation
\begin{equation*}
\mathcal{C}_{noneq} = \frac{1}{\sqrt{6}}\int_{0}^T \|\sigma(t)\|_{\ell^2}\rd t, \quad\quad \mathcal{C}_{eq} = \sqrt{\frac{T}{6}}\biggr(\int_{0}^T \|\sigma(t)\|_{\ell^2}^2\rd t \biggr)^{1/2}.
\end{equation*}
By the H\"{o}lder inequality, we have $0 \leq \mathcal{C}_{noneq}\leq \mathcal{C}_{eq}$. We also stress that the lower bounds in \eqref{eq:AC_noneq} and \eqref{eq:AC_eq} diverge as $n$ tends to infinity, which illustrates significant increase of the informational cost needed to decrease the method error. Next, for fixed $\bar{M},$ we construct truncated Euler algorithm with adaptive path-independent step-size control $X^{step}_{M_n,k_n^*}$ which is optimal in class $\chi_{noneq}^{\bar{M}},$ since it attains asymptotic lower bound in \eqref{eq:AC_noneq}. We also provide lower bounds which hold irrespective of the truncation level sequence, see Theorem \ref{th:AC_final}. While those cannot be asymptotically achieved by any admissible algorithm, we show that the errors for the methods proposed in this paper can be arbitrary close in some sense to the obtained bound. We note that both error bounds and optimality are investigated in the spirit of IBC (Information-Based Complexity) framework.

According to our best knowledge, this is the first paper to establish lower bounds for exact asymptotic error in the global approximation setting for SDEs with countably dimensional Wiener process. Moreover, the new constructed algorithms are implementable, and their performance is verified by using the multiprocessing library in Python.

The paper is organised as follows. In Section \ref{sec:preliminaries} we provide basic notation, model assumptions and properties of the underlying solution $X.$ Then, in Section \ref{sec:lower_bound} we investigate lower bounds for exact asymptotic error behaviour. In Section \ref{sec:equidistant} and in Section \ref{sec:step_size} we introduce and show optimality of the truncated dimension Euler schemes in the classes $\chi_{eq}^{\bar{M}}$ and $\chi_{noneq}^{\bar{M}},$ respectively. Next, in Section \ref{sec:optimal_alg}, we extend optimality investigation to the classes $\chi_{eq}$ and $\chi_{noneq}.$ Finally, Section \ref{sec:experiments} deals with numerical experiments in Python and alternative solver implementation utilizing Numba compiler.
\section{Preliminaries}\label{sec:preliminaries}

Let $(\Omega,\Sigma,\mathbb{P})$ be a complete probability space and $\mathcal{N}_0=\{A\in\Sigma \ | \ \mathbb{P}(A)=0\}$. Let also $(\Sigma_t)_{t\geq  0}$ be a filtration on $(\Omega,\Sigma,\mathbb{P})$ that satisfies the usual conditions, i.e., $\mathcal{N}_0\subset\Sigma_0$ and is right-continuous.
Let $W=[W_1,W_2,\ldots]^T$ be a countably dimensional $(\Sigma_t)_{t\geq  0}$-Wiener process defined on $(\Omega,\Sigma,\mathbb{P})$. We note that similarly to the finite dimensional case, stochastic integrals with respect to $W$ enjoy properties such as the Burkholder inequality, It\^{o} isometry, It\^{o} lemma, see e.g. \cite{Cao, CohEl, PPMSLS}.

For $x\in \ell^2(\mathbb{R})$ we use the following notation $x = (x_1, x_2, \ldots).$ We introduce projection operators
$P_k:  \ell^2 (\mathbb{R}) \mapsto  \ell^2 (\mathbb{R}),$ $k\in\mathbb{N}\cup\{\infty\}$ with
\begin{equation*}
	P_k v = (v_1, v_2, \ldots, v_k, 0, 0,  \ldots), \quad   v\in\ell^2(\mathbb{R}).
\end{equation*}
We also set  $P_{\infty}=Id$, hence $P_{\infty}v=v$ for all $v\in\ell^2(\mathbb{R})$. For brevity, in this paper we write $\|\cdot\|_{\ell^2}$ instead of $\|\cdot\|_{\ell^2(\mathbb{R})}.$

For vectors $v = [v_1, \ldots, v_m] \in \R^m,$ $u = [u_1, \ldots, u_l] \in \R^l,$ we denote by $v\oplus u$ the vector $[v_1, \ldots, v_m, u_1,\ldots, u_l]\in\mathbb{R}^{m+l}$. By $\bigoplus_{k=1}^n w_k$ we understand the vector $w_1 \oplus w_2 \oplus \ldots \oplus w_n.$

In this paper, we use the following asymptotic notation. For two real-valued sequences $(a_n)_{n=1}^\infty,\, (b_n)_{n=1}^\infty,$ we write $a_n \lessapprox b_n, n\to +\infty,$ if and only if $\limsup_{n\to+\infty} a_n/b_n \leq 1.$ We also say that $a_n \approx b_n, n \to +\infty,$ if and only if $\lim_{n\to +\infty}a_n / b_n = 1.$ Furthermore, the asymptotic symbols $\Omega, \Theta, \mathcal{O},o$ appearing in this paper are aligned with classical Landau notation for sequences. For a sequence $(c(n))_{n=1}^\infty$ of non-negative numbers converging to zero and $\epsilon>0,$ we define the inverse $c^{-1}(\epsilon) = \sup\{n \in \mathbb{N}\, | \, c(n)>\epsilon\}.$
\medskip

We assume that drift coefficient $a: [0,T]\times \mathbb{R} \mapsto \mathbb{R} $ belongs to $\mathcal{C}^{1,2}([0,T]\times \mathbb{R})$ and satisfies the following conditions:
\begin{enumerate}
	\item [(A1)] $|a(t,x) - a(s,x)| \leq C_1(1+ |x|)|t-s|$ for all $t,s\in [0,T], \ x\in \mathbb{R}$,
	\item [(A2)] $|a(t,0)|\leq C_1$ for all $t\in[0,T]$,
	\item [(A3)] $|a(t,x) - a(t,y)| \leq C_1|x-y|$ for all $x,y \in \mathbb{R}$, $t \in [0,T]$,
	\item [(A4)] $|\frac{\partial a}{\partial x}(t,x) - \frac{\partial a}{\partial x}(t,y)| \leq C_1|x-y|$ for all pairs $(t,x), (t,y) \in [0,T]\times \mathbb{R}$
\end{enumerate}
for some $C_1 > 0.$
\smallskip

Let $\delta = (\delta(k))_{k = 1}^{\infty}\subset \mathbb{R}$ be a positive, strictly decreasing sequence vanishing at infinity. For fixed $\delta,$ by $\mathcal{G}_\delta$ we denote a set of all non-decreasing sequences $G=(G(n))_{n=1}^{\infty} \subset \mathbb{N}$ such that $G(n)\to +\infty$ and
\begin{equation}\label{eq:G_delta}
    \lim\limits_{n \to +\infty}n^{1/2}\,\delta\big(G(n)\big) = 0.
\end{equation}

We assume that diffusion coefficient $\sigma = (\sigma_1, \sigma_2, \ldots):[0,T] \mapsto\ell^2(\mathbb{R})$ satisfies the following conditions:
\begin{enumerate}
    \item [(S1)] $\|\sigma(0)\|_{\ell^2} \leq C_2,$
	\item [(S2)] $\Vert \sigma(t) - \sigma(s)\Vert_{\ell^2} \leq C_2|t-s|$ for all $t,s\in [0,T]$,
	\item [(S3)] $\Vert \sigma(t) - P_k \sigma(t)\Vert_{\ell^2} \leq C_2\delta(k)$ for all $k \in \mathbb{N}, \ t \in [0,T],$
\end{enumerate}
for $C_2 > 0$ and some fixed sequence $\delta$ as above.

Our idea is to first provide the approximation of truncated solution $X^M = X^M(a,\sigma,W)$ which depends on the first $M\in \mathbb{N}$ coordinates of the underlying Wiener process $W$. Then, we estimate globally the inevitable truncation error resulting from substituting the process $X$ for $X^M.$ For convenience, we will use the notation $X^{\infty}:= X.$

To this end, we consider the family of processes $X^M, \ M \in \mathbb{N} \cup \{\infty\},$ satisfying
\begin{equation}
\label{findim_equation}
	\left\{ \begin{array}{ll}
	\displaystyle{
	\rd X^M(t) = a(t,X^M(t))\rd t + \sigma^M(t) \rd W(t) , \ t\in [0,T]},\\
	^M(0)=x_0.
	\end{array}\right.
\end{equation}
In particular, for $M=+\infty$ we obtain the main problem \eqref{main_equation}. For further analysis, we need some properties of the process $X^M.$ Those are presented below, in a corollary from Lemma 1 in \cite{PPMSLS}.
\begin{lemma}
\label{lemma_sol}
	For every $M \in \mathbb{N} \cup \{\infty\}$ the equation \eqref{findim_equation} admits a unique strong solution $X^M = (X^M(t))_{t\in [0,T]}.$ Moreover, there exists $K \in (0,+\infty),$ depending only on the constants $C_1, C_2,$ such that for every
	$M\in\mathbb{N}\  \cup \ \{\infty\}$ we have that
	\begin{equation}
	\label{lemma_sol_bound}
		\mathbb{E}\Bigl(\sup_{0 \leq t \leq T} |X^M(t)|^2\Bigr) \leq K
	\end{equation}
	and for all $s,t \in [0,T]$ the following holds 
	\begin{equation*}\label{lemma_sol_regularity}
	    \mathbb{E}|X^M(t)-X^M(s)|^2 \leq K|t-s|.
	\end{equation*}
\end{lemma}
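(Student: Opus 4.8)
The plan is to verify that, for each $M \in \mathbb{N}\cup\{\infty\}$, the coefficients of \eqref{findim_equation} meet the hypotheses of Lemma 1 in \cite{PPMSLS}, and then to read off the conclusion, paying close attention to the uniformity of the resulting constant in $M$. The key structural observation is that the diffusion term is \emph{additive}: $\sigma^M(t) = P_M\sigma(t)$ depends on $t$ only and not on the state, so the sole source of nonlinearity is the drift $a$. By (A3) the drift is globally Lipschitz in $x$, and it has linear growth since (A2) and (A3) give $|a(t,x)| \le |a(t,0)| + |a(t,x)-a(t,0)| \le C_1(1+|x|)$. Moreover, (S1) and (S2) combined with $\|P_M\sigma(t)\|_{\ell^2}\le \|\sigma(t)\|_{\ell^2}$ yield the $M$-free bound $\|\sigma^M(t)\|_{\ell^2} \le \|\sigma(t)\|_{\ell^2} \le C_2(1+T)$ for all $t\in[0,T]$. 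With these properties, existence and uniqueness of a strong solution $X^M$ for each $M$ follows directly from the cited lemma.

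For the moment bound \eqref{lemma_sol_bound}, I would start from the integral form
\[
X^M(t) = x_0 + \int_0^t a(u,X^M(u))\rd u + \int_0^t \sigma^M(u)\rd W(u),
\]
take the supremum over $[0,t]$, square, and apply $(p+q+r)^2 \le 3(p^2+q^2+r^2)$. The drift integral is handled by the Cauchy--Schwarz inequality together with the linear growth of $a$, while the stochastic integral is controlled by Doob's maximal inequality followed by the It\^o isometry, which gives
\[
\mathbb{E}\Bigl(\sup_{0\le u\le t}\Bigl|\int_0^u \sigma^M(r)\rd W(r)\Bigr|^2\Bigr) \le 4\int_0^t \|\sigma^M(r)\|_{\ell^2}^2\rd r \le 4\int_0^t \|\sigma(r)\|_{\ell^2}^2\rd r.
\]
Collecting the terms and invoking Gr\"onwall's inequality then produces the bound $\mathbb{E}(\sup_{0\le t\le T}|X^M(t)|^2) \le K$, whose constant is assembled only from $C_1$, $C_2$ and the fixed data $T$, $x_0$; crucially, every occurrence of $\sigma^M$ has been dominated by $\sigma$ before any Gr\"onwall step, so $K$ is independent of $M$ (the case $M=\infty$ being identical).

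For the mean-square regularity estimate, I would write, for $s < t$, $X^M(t)-X^M(s) = \int_s^t a(u,X^M(u))\rd u + \int_s^t \sigma^M(u)\rd W(u)$, split $\mathbb{E}|X^M(t)-X^M(s)|^2$ into the two corresponding contributions, and bound the drift part via Cauchy--Schwarz and the moment bound just established (yielding a term of order $(t-s)^2 \le T(t-s)$), and the diffusion part via the It\^o isometry by $\int_s^t \|\sigma^M(u)\|_{\ell^2}^2\rd u \le C_2^2(1+T)^2(t-s)$. Summing the two gives the claimed estimate with a constant again depending only on $C_1$, $C_2$ and the fixed $T$, $x_0$.

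The only genuine subtlety — and the single point I would stress — is the \textbf{uniformity of $K$ in $M$}, which is not automatic from a black-box application of the cited lemma, since a priori the constant could degrade as $M \to \infty$. It is secured precisely by the monotonicity $\|P_M\sigma(t)\|_{\ell^2} \le \|\sigma(t)\|_{\ell^2}$, which allows every $M$-dependent diffusion quantity to be replaced by its $M$-free majorant before any isometry or Gr\"onwall step. Everything else is routine SDE estimation; the additive structure of the noise removes the usual complications associated with state-dependent diffusion coefficients.
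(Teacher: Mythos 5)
Your argument is correct, and it is more self-contained than what the paper actually does: the paper offers no proof at all, simply stating the lemma as a corollary of Lemma~1 in \cite{PPMSLS}, which treats a more general model (state-dependent diffusion and a Poisson random measure) and already delivers the moment bound and the H\"older-$1/2$ mean-square regularity uniformly in $M$. Your route instead verifies the hypotheses of that lemma only for existence and uniqueness and then re-derives the two estimates directly, exploiting the additive structure of the noise; this buys transparency on exactly the point you flag, namely that the constant $K$ does not degrade as $M\to\infty$, which in the black-box citation is inherited from the reference rather than visible. Two minor remarks: the lemma's phrasing ``depending only on $C_1,C_2$'' tacitly includes the fixed data $T$ and $x_0$, as your constant does, so there is no discrepancy; and your Gr\"onwall step applied to $t\mapsto\mathbb{E}\bigl(\sup_{0\le u\le t}|X^M(u)|^2\bigr)$ implicitly assumes this function is finite, which strictly speaking requires a routine localization by stopping times (or is supplied by the existence theory you have already invoked) --- worth a sentence, but not a gap.
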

\smallskip


We also state truncation error bound for our model. This result is a corollary from Proposition 1 in \cite{PPMSLS}.
\begin{proposition}
\label{prop:infdim}
  There exists $K_1\in (0,+\infty)$ such that for any $M \in \mathbb{N}$  it holds
 \begin{equation}
     	\sup\limits_{0 \leq t \leq T}\Vert X(a,\sigma, W)(t)-X^M(a,\sigma,W)(t)\Vert_{L^2(\Omega)} \leq K\delta(M).
 \end{equation}
\end{proposition}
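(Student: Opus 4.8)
The plan is to control the difference process directly by a Gr\"onwall argument, exploiting the global Lipschitz regularity of the drift (A3) and the truncation bound (S3) on the diffusion. Write $Z^M = X - X^M = X^{\infty}-X^M$. Subtracting the integral forms of \eqref{main_equation} and \eqref{findim_equation} and using the common initial datum $X(0)=X^M(0)=x_0$, I obtain for every $t\in[0,T]$
\begin{equation*}
Z^M(t) = \int_0^t \bigl(a(s,X(s)) - a(s,X^M(s))\bigr)\rd s + \int_0^t \bigl(\sigma(s) - \sigma^M(s)\bigr)\rd W(s),
\end{equation*}
where $\sigma^M = P_M\sigma$, so in particular $Z^M(0)=0$.

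Next I would square, take expectations, and use $(x+y)^2\le 2x^2+2y^2$ to separate the drift and diffusion contributions. For the drift term, the Cauchy--Schwarz inequality in the time variable followed by (A3) gives
\begin{equation*}
\mathbb{E}\Bigl|\int_0^t \bigl(a(s,X(s)) - a(s,X^M(s))\bigr)\rd s\Bigr|^2 \le C_1^2\, T \int_0^t \mathbb{E}|Z^M(s)|^2 \rd s.
\end{equation*}
For the stochastic integral I would invoke the It\^o isometry valid for the countably dimensional Wiener process (as noted in Section \ref{sec:preliminaries}) together with (S3):
\begin{equation*}
\mathbb{E}\Bigl|\int_0^t \bigl(\sigma(s) - \sigma^M(s)\bigr)\rd W(s)\Bigr|^2 = \int_0^t \mathbb{E}\Vert \sigma(s) - P_M\sigma(s)\Vert_{\ell^2}^2 \rd s \le C_2^2\, \delta(M)^2\, T.
\end{equation*}
Combining these with $\varphi(t):=\mathbb{E}|Z^M(t)|^2$ yields $\varphi(t)\le 2C_2^2 T\,\delta(M)^2 + 2C_1^2 T\int_0^t \varphi(s)\rd s$.

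Finally I would apply Gr\"onwall's inequality to deduce $\sup_{0\le t\le T}\varphi(t)\le 2C_2^2 T\, e^{2C_1^2 T^2}\,\delta(M)^2$, and taking square roots gives the claim, e.g. with $K_1 = C_2\sqrt{2T}\,e^{C_1^2 T^2}$ (the dependence is only on $C_1,C_2,T$). The one genuinely non-routine point is the justification of the It\^o isometry and the finiteness of all expectations in the infinite-dimensional setting: one must check that $Z^M$ is a square-integrable It\^o process, which follows from the second-moment bound \eqref{lemma_sol_bound} of Lemma \ref{lemma_sol} applied to both $X$ and $X^M$, and that $\sigma-\sigma^M$ is a legitimate square-integrable $\ell^2$-valued integrand by (S1)--(S3). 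Since the countably dimensional It\^o isometry is already assured in the preliminaries, the whole argument reduces to the standard finite-dimensional Gr\"onwall estimate, transferred to the present framework, which is precisely why the statement follows as a corollary of Proposition~1 in \cite{PPMSLS}.
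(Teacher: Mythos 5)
Your argument is correct. The paper does not prove Proposition~\ref{prop:infdim} itself; it simply invokes Proposition~1 of \cite{PPMSLS}, where the analogous estimate is established for a more general model (including a Poisson random measure term) by essentially the decomposition you use. Your self-contained version is sound: the difference process has no initial-value discrepancy, the drift contribution is handled by Cauchy--Schwarz and (A3), the diffusion contribution by the It\^o isometry for the countably dimensional Wiener process together with (S3) (note $\sigma$ is deterministic, so the expectation there is vacuous), and Gr\"onwall closes the estimate; the integrability and measurability of $t\mapsto \mathbb{E}|Z^M(t)|^2$ needed for Gr\"onwall are guaranteed by \eqref{lemma_sol_bound} and the regularity bound of Lemma~\ref{lemma_sol}, exactly as you note. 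Since this is the standard route and matches what the cited reference does in the special case relevant here, there is nothing to object to.
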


We define truncated dimension time-continuous Euler algorithm $\euler = (\euler(t))_{t\in [0,T]}$ that approximates the process $(X(t))_{t \in [0,T]}$. Take $M,n \in \mathbb{N}$, and let
\begin{equation}\label{partition}
    \tau_n: 0 = t_{0,n} < t_{1,n} < \ldots < t_{k_n-1,n} < t_{k_n,n} = T
\end{equation}
be a sequence of partitions of the interval $[0,T],$ with $k_n \in \mathbb{N}.$ 

We set
\begin{equation}\label{continuous_scheme}
	\begin{cases}
		\euler(0) = x_0 \\
		\euler(t) = \euler(t_{j,n}) + a\big(t_{j,n} , \euler(t_{j,n})\big)(t - t_{j,n}) + \sigma^M(t_{j,n})\big(W(t) - W(t_{j,n})\big), \\ 
		\quad\quad\quad\quad\quad t \in [t_{j,n}, t_{j+1,n}],
		\ \ j=0,1,\ldots, k_n-1.
	\end{cases}
\end{equation}
We stress that $\euler$ is not implementable since it requires complete knowledge of the trajectories of the underlying Wiener process.

Now we state the upper error bound of the truncated dimension time-continuous Euler algorithm in finite dimensional setting.
\begin{proposition}\label{prop:findim}
Under the assumptions (A1)-(A4) and (S1)-(S3), there exists a positive constant $C_0$, depending only on $C_1, C_2$, such that for all $M,n\in\mathbb{N}$ the time-continuous truncated Euler process \eqref{continuous_scheme} based on partition \eqref{partition} satisfies
	\begin{equation}\label{Euler_error_upper_bound}
		\sup_{0 \leq t \leq T} \Vert\euler(t)-X^M(t)\Vert_{L^2(\Omega)} \leq C_0 \max_{0 \leq j \leq k_n-1}\Delta t_{j,n},
	\end{equation}
	where $\Delta t_{j,n} = t_{j+1,n} - t_{j,n},$ $j=0,1,\ldots, k_n-1.$
\end{proposition}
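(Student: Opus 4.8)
The plan is to estimate, uniformly in $t$, the mean-square error of the scheme and to extract the first-order rate $\Delta_n:=\max_{0\le j\le k_n-1}\Delta t_{j,n}$ by exploiting the additive (state-independent) structure of the noise together with the second-order regularity (A4). Write $\eta_n(s):=t_{j,n}$ for $s\in[t_{j,n},t_{j+1,n})$. Telescoping the one-step definition \eqref{continuous_scheme} shows that $\euler$ solves an integral equation with frozen coefficients, so the error $e_n:=\euler-X^M$ satisfies
\[ e_n(t)=\int_0^t\bigl[a(\eta_n(s),\euler(\eta_n(s)))-a(s,X^M(s))\bigr]\rd s+\int_0^t\bigl[\sigma^M(\eta_n(s))-\sigma^M(s)\bigr]\rd W(s). \]
Before estimating $e_n$, I would record the a priori bounds for the scheme that are not part of Lemma \ref{lemma_sol}: from the linear growth of $a$ (implied by (A2)--(A3)) and the boundedness of $\|\sigma(t)\|_{\ell^2}$ on $[0,T]$ (implied by (S1)--(S2)), a Gr\"onwall argument gives $\sup_{0\le t\le T}\mathbb{E}|\euler(t)|^2\le K'$ and $\sup_{0\le t\le T}\mathbb{E}|\euler(t)|^4\le K''$ uniformly in $M,n$; the one-step formula then yields $\mathbb{E}|\euler(s)-\euler(\eta_n(s))|^2\le C\Delta_n$ and $\mathbb{E}|\euler(s)-\euler(\eta_n(s))|^4\le C\Delta_n^2$.

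The stochastic integral is the easy term: since $P_M$ is a contraction, (S2) gives $\|\sigma^M(\eta_n(s))-\sigma^M(s)\|_{\ell^2}\le\|\sigma(\eta_n(s))-\sigma(s)\|_{\ell^2}\le C_2\Delta_n$, so by the It\^o isometry it contributes at most $C_2^2T\Delta_n^2$ to $\mathbb{E}|e_n(t)|^2$. For the drift integrand I would use the splitting
\[ \bigl[a(\eta_n(s),\euler(\eta_n(s)))-a(s,\euler(\eta_n(s)))\bigr]+\bigl[a(s,\euler(\eta_n(s)))-a(s,\euler(s))\bigr]+\bigl[a(s,\euler(s))-a(s,X^M(s))\bigr]. \]
The first bracket is $\mathcal{O}(\Delta_n)$ pointwise by (A1) and the moment bound on $\euler$, hence $\mathcal{O}(\Delta_n^2)$ in mean square after integrating; the third bracket is Lipschitz in the error by (A3), producing the term $\int_0^t\mathbb{E}|e_n(s)|^2\rd s$ that will feed the final Gr\"onwall step. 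The middle bracket is the delicate one: the crude Lipschitz bound (A3) would only give $\mathcal{O}(\sqrt{\Delta_n})$, because $\euler(s)-\euler(\eta_n(s))$ is of size $\sqrt{\Delta_n}$ in $L^2$. Here (A4) is essential: Taylor-expanding in the state variable about $\euler(s)$,
\[ a(s,\euler(\eta_n(s)))-a(s,\euler(s))=\frac{\partial a}{\partial x}(s,\euler(s))\bigl(\euler(\eta_n(s))-\euler(s)\bigr)+R_n(s), \]
with $|R_n(s)|\le\tfrac{C_1}{2}|\euler(\eta_n(s))-\euler(s)|^2$, so $R_n$ contributes $\mathcal{O}(\Delta_n^2)$ in mean square by the fourth-moment increment bound.

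Substituting $\euler(\eta_n(s))-\euler(s)=-a(\eta_n(s),\euler(\eta_n(s)))(s-\eta_n(s))-\sigma^M(\eta_n(s))\bigl(W(s)-W(\eta_n(s))\bigr)$ into the linear term splits it into a drift$\times$time part, which is $\mathcal{O}(\Delta_n^2)$ in mean square by the moment bound, and the genuinely problematic piece
\[ J_n(t)=-\int_0^t\frac{\partial a}{\partial x}(s,\euler(s))\,\sigma^M(\eta_n(s))\bigl(W(s)-W(\eta_n(s))\bigr)\rd s, \]
whose integrand is of size $\sqrt{\Delta_n}$ pointwise. The main obstacle is to prove $\mathbb{E}|J_n(t)|^2=\mathcal{O}(\Delta_n^2)$, i.e.\ to gain a full order by cancellation rather than by a pointwise estimate. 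My approach is to replace $\frac{\partial a}{\partial x}(s,\euler(s))$ by its state-frozen version $\frac{\partial a}{\partial x}(s,\euler(\eta_n(s)))$, which is $\mathcal{F}_{t_{j,n}}$-measurable on $[t_{j,n},t_{j+1,n})$. The corresponding subinterval contributions $\xi_j$ then satisfy $\mathbb{E}[\xi_j\mid\mathcal{F}_{t_{j,n}}]=0$ because $\mathbb{E}[W(s)-W(\eta_n(s))\mid\mathcal{F}_{t_{j,n}}]=0$, so $(\xi_j)_j$ is an orthogonal (martingale-difference) family and $\mathbb{E}|\sum_j\xi_j|^2=\sum_j\mathbb{E}|\xi_j|^2$; a one-step estimate gives $\mathbb{E}|\xi_j|^2\le C\Delta_n(\Delta t_{j,n})^2$, whence $\sum_j\mathbb{E}|\xi_j|^2\le C\Delta_n\sum_j(\Delta t_{j,n})^2\le CT\Delta_n^2$. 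The state-freezing error is controlled by (A4) together with the Cauchy--Schwarz inequality and the fourth-moment bounds on the increments of $\euler$ and on the Gaussian factor $\sigma^M(\eta_n(s))(W(s)-W(\eta_n(s)))$, and is again $\mathcal{O}(\Delta_n^2)$.

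Collecting the estimates yields $\mathbb{E}|e_n(t)|^2\le C\Delta_n^2+\tilde C\int_0^t\mathbb{E}|e_n(s)|^2\rd s$ for $t\in[0,T]$, with $C,\tilde C$ depending only on $C_1,C_2$ (the data $T,x_0$ being fixed). Gr\"onwall's lemma then gives $\mathbb{E}|e_n(t)|^2\le C_0^2\Delta_n^2$ uniformly in $t$, which after taking the supremum over $t$ and the square root is exactly \eqref{Euler_error_upper_bound}. The decisive point, and the only place where both the additive structure of the noise and the $\mathcal{C}^{1,2}$/(A4) regularity are used, is the martingale-difference cancellation in $J_n$, which is what upgrades the rate from $1/2$ to $1$.
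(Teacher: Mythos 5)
Your argument is correct and reaches the stated first-order rate, but it takes a genuinely different route through the critical term from the one in the paper. Both proofs share the outer architecture: split off the diffusion error (handled identically, via the It\^o isometry and (S2), since $P_M$ is a contraction), split the drift error into a time-freezing part, a state-increment part, and a Lipschitz part feeding Gr\"onwall. The difference is in the state-increment part. The paper keeps the exact solution there, writing $a(t_{j,n},X^M(s))-a(t_{j,n},X^M(t_{j,n}))$ and expanding it \emph{exactly} by the It\^o formula applied to $x\mapsto a(t_{j,n},x)$ along $X^M$; the resulting Lebesgue part is $\mathcal{O}(\Delta_n^2)$ by the moment bounds and \eqref{eq:bounded_derivatives}, and the martingale part is killed by the observation that It\^o integrals over disjoint subintervals are uncorrelated, so only the diagonal terms survive and the It\^o isometry gives $\mathcal{O}(\Delta_n^2)$. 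You instead keep the \emph{scheme} increment $\euler(\eta_n(s))-\euler(s)$, which has an explicit one-step closed form, Taylor-expand $a(s,\cdot)$ with the (A4)-controlled quadratic remainder, and recover the same cancellation by freezing the state in $\partial a/\partial x$ so that the subinterval contributions $\xi_j$ become a martingale-difference family. These are two packagings of the same underlying mechanism (conditional mean zero of the Wiener increment over each subinterval), but the trade-offs differ: your version avoids the It\^o formula entirely and works with explicitly known increments, at the price of needing fourth-moment bounds on $\euler$ and on its increments (which you correctly flag as additional a priori estimates; they do hold here because the noise is additive with $\|\sigma(t)\|_{\ell^2}$ bounded, but they go beyond the second-moment Lemma \ref{lemma_scheme_bound} that the paper relies on), whereas the paper's version needs only second moments but invokes the full $\mathcal{C}^{1,2}$ regularity through the It\^o expansion of $a$ along $X^M$. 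Your closing Gr\"onwall step and the resulting dependence of $C_0$ on $C_1,C_2$ match the paper's conclusion.
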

\noindent The proof of Proposition \ref{prop:findim} is postponed to the Appendix.
\medskip

From Proposition \ref{prop:infdim} and Proposition \ref{prop:findim} we obtain the following result.
\begin{theorem}\label{upper_bound}
	 Let the coefficients $a,\sigma$ satisfy (A1)-(A4) and (S1)-(S3) with sequence $\delta$, respectively. Then, there exists a positive constant $K$, depending on $C_1, C_2, \delta,$ such that for every $M,n\in\mathbb{N}$ and the discretisation \eqref{partition}, it holds
	\begin{equation*}
	    \sup_{0\leq t \leq T}\|X(t) - \euler(t)\|_{L^2(\Omega)}\leq K\Bigl(\max\limits_{0\leq j \leq k_n-1} \,\Delta t_{j,n} +  \delta(M)\Bigr).
	\end{equation*}
\end{theorem}

At the end of this section, we present several remarks on the model, imposed assumptions, and suitability of the chosen stochastic scheme.

\begin{remark}
Generally, the concept of SDEs with integrals wrt countably dimensional Wiener process is used to describe the evolution driven by countably many risk factors. This modelling choice can be leveraged for a wide range of problems in, e.g., genetics, mathematical finance or physics \cite{carmona, PBL}. From a pragmatic point of view, infinite dimensional setting can be leveraged when the number of random risks is finite but still too large to be entirely captured by the electronic machines. On the other hand, the integrals appearing in this paper can be viewed as stochastic integrals with respect to cylindrical Brownian motion in Hilbert space of sequences $\ell^2(\mathbb{R})$ and hence, our model forms a bridge between ordinary SDEs and stochastic partial differential equations (SPDEs), see \cite{zabczyk}.
\end{remark}

\begin{remark}
    First, the assumptions (A1) - (A4) imply the existence of a constant $K_0 > 0$ such that 
    \begin{equation}\label{eq:bounded_derivatives}
    \max\bigg\{\left|\frac{\partial a}{\partial x}(t,x)\right|, \left|\frac{\partial^2 a}{\partial x^2}(t,x)\right|\bigg\} \leq K_0 \quad \textrm{for every} \quad (t,x)\in [0,T] \times \mathbb{R}.
    \end{equation}
Second, in the presented setting, a crucial role is played by appropriate choice of corresponding truncation levels for admissible methods. Indeed, those are defined in terms of the elements of $\mathcal{G}_{\delta}$ as per \eqref{eq:G_delta}. We note that for every $\delta$ the corresponding set $\mathcal{G}_\delta$ is nonempty. 
    Furthermore, the slower rate of diffusion decay to zero, the greater values of $G(n)$ need to be taken. We note that $\delta$ does not need to be optimal in a sense that for fixed $\sigma$ there might exist $\delta'$ also satisfying (S3) and $\delta' < \delta.$ Nevertheless, sharper bound in (S3) yields greater palette of the corresponding sequences $G$ in \eqref{eq:G_delta}, as $\delta' \leq \delta$ implies $\mathcal{G}_\delta \subset \mathcal{G}_{\delta'}.$
\end{remark}

\begin{remark}
It is worth mentioning that suitable modifications of the classic Euler scheme for finite-dimensional setting have been investigated in e.g. \cite{Muller, PMPP14, PPMSLS}. For the problem \eqref{main_equation}, the method $\euler$ coincides with truncated dimension time-continuous Euler algorithm proposed in \cite{PPMSLS}. However, the regularity of function $a$ in our case enhances the rate of convergence from $1/2$ to $1,$ see Theorem \ref{upper_bound}. Indeed, the proposed method also coincides with the Milstein scheme; see e.g., \cite{AKPP, KRWU, PMPP19, PP1, PP2, PPVSMS, PPVSMS2} where the approximation by modified Milstein schemes for finite dimensional models was considered.
\end{remark}

In the following sections, we investigate lower bounds for exact asymptotic error behaviour and construct optimal methods in suitable subclasses of admissible algorithms. The optimality is defined in the spirit of Information-Based Complexity (IBC) framework, see also \cite{TWW88} for more details.
\section{Lower bounds for exact asymptotic error behaviour}\label{sec:lower_bound}
In this section, we derive minimal global approximation error for our initial problem \eqref{main_equation}. The main results are presented in Theorem \ref{th:AC_lower}.

Let us fix $(a,\sigma),$ and a sequence $\delta$ satisfying (S3). An arbitrary method under consideration is a sequence of the form $\overline{X} = (\xalg)_{n=1}^{\infty}$ and can be equivalently viewed as a quadruple $\overline{X} = (\bar{\Delta}, \bar{\mathcal{N}}, \bar{M}, \bar{\phi}),$ where:
\begin{itemize}
    \item $\bar{\Delta}=(\bar{\Delta}_{n})_{n=1}^{\infty}$ is a sequence of (possibly) non-expanding partitions of the interval $[0,T],$ i.e.,
    \begin{equation}\label{arbitary_partition}
        \bar{\Delta}_{n}: \quad 0=\bar{t}_{0,n} < \bar{t}_{1,n} < \ldots < \bar{t}_{\bar{k}_n-1,n} < \bar{t}_{\bar{k}_n,n} = T,
    \end{equation}
    where for some $\overline{C}_1, \overline{C}_2>0$ it holds
    \begin{equation}\label{eq:bar_kn}
    \overline{C}_1\, n \geq \bar{k}_n \geq \overline{C}_2 \, n^{1/2}, \quad n\geq n_0(\overline{X}) \in \mathbb{N}.
    \end{equation}
    \item $\bar{\mathcal{N}} = (\mathcal{N}_{M_n, \bar{k}_n})_{n=1}^{\infty}$ is a sequence of information vectors. For fixed $n \in \mathbb{N},$ the vector $\mathcal{N}_{M_n, \bar{k}_n}$  consists of the points in which the method $\xalg$ evaluates the values of underlying (scalar) Wiener processes $W_k,$ $k=1,\ldots, M_n:$
    \begin{equation*}
        \mathcal{N}_{M_n, \bar{k}_n} = \bigoplus_{k=1}^{M_n}\,\big[W_k(\bar{t}_{1,n}), W_k(\bar{t}_{2,n}),\ldots, W_k(\bar{t}_{\bar{k}_n,n})\big].
    \end{equation*}
    \item $\bar{M} = (M_n)_{n=1}^{\infty} \in \mathcal{G}_\delta$ with $M_n$ indicating number of initial Wiener process coordinates used by the method $\xalg, \,n \in \N$.
    \item The method $\xalg$ is assumed to evaluate $W$ in the time points from $\mathcal{N}_{M_n, \bar{k}_n},$ yielding a process $\xalg(a,\sigma,W)$ being an approximation of $X.$ Namely, we assume the existence of Borel measurable mappings $\bar{\phi} = (\phi_n)_{n=1}^\infty,$ with $\phi_n: \mathbb{R}^{\bar{k}_n \cdot M_n} \mapsto L^2([0,T]),$ such that
    \begin{equation*}
    \phi_n(\mathcal{N}_{M_n, \bar{k}_n}(W)) = \xalg, \quad n \in \mathbb{N}.
    \end{equation*}
\end{itemize}
\medskip

The class of algorithms satisfying above conditions is denoted by $\chi_{noneq}.$ In this paper, we distinguish a subclass $\chi_{eq}\subset \chi_{noneq}$ of methods leveraging equidistant partitions
\begin{equation}\label{def:chieq}
    \chi_{eq} = \big\{\overline{X}\in\chi_{noneq}\,|\, \exists_{n_0 = n_0(\overline{X})}: \forall_{n\geq n_0} \ \bar{\Delta}_n = \{jT/\bar{k}_n \ : \ j=0,1,\ldots, \bar{k}_n\}\big\}.
\end{equation}

The optimality in class of methods leveraging equidistant meshes for global approximation problem in finite dimensional model has been considered recently in e.g., \cite{AK22}. In this paper we also show the benefit of leveraging adaptive meshes instead of equidistant ones.

By the cost of the algorithm $\xalg$ we understand a number of evaluations of scalar Wiener processes performed by $\xalg$. Specifically, we have
$$\cost(\xalg) = \begin{cases}
    M_n \cdot \bar{k}_n,  \quad \quad \textrm{when  } \sigma \not\equiv 0, \\
    0, \quad\quad\quad\quad \ \ \textrm{when  } \sigma \equiv 0.
\end{cases}
$$
While in case of $\sigma \equiv 0$ we actually deal with ordinary differential equations and still some calculations need to be performed, there is no $W$ process involved. In our setting, this is justified by zero cost.

The global approximation error is measured in a product $L^2(\Omega \times [0,T])$ norm 
\begin{equation*}
    \big\|X - \xalg\big\|_2 := \bigg(\mathbb{E}\int_{0}^T|X(t) - \xalg(t)|^2 \rd t\bigg)^{1/2}.
\end{equation*}

For a fixed method $\xalg=(\bar{\Delta}, \bar{\mathcal{N}}, \bar{M}, \bar{\phi}) \in \chi_{noneq}\setminus \chi_{eq}$ we define the sequence of augmented partitions $\widetilde{\Delta} = (\widetilde{\Delta}_n)_{n=1}^\infty$ with
\begin{equation}\label{eq:bar_delta_partition}
    \widetilde{\Delta}_n := \bar{\Delta}_n \,\cup\, \Delta_{m_n}^{eq} , \quad n\in\N,
\end{equation}
where $\bar{k}_n^{1/2}/ m_n \to 0$ and $m_n/ \bar{k}_n \to 0,$ $n\to+\infty,$ and $\Delta_{m_n}^{eq} = (t_j^{eq})_{j=0}^{m_n}$ is an equidistant partition, $t_j^{eq} = Tj/m_n, \ j=0, \ldots, m_n.$ In the sequel, by $\widetilde{k}_n$ we denote the number of distinct time points $\widetilde{t}_{j,n}$ in $\widetilde{\Delta}_n.$ Consequently, we get 
\begin{equation*}
\bar{k}_n \leq \widetilde{k}_n \leq \bar{k}_n + m_n -1,
\end{equation*}
which in turn implies that $\lim_{n\to +\infty}\widetilde{k}_n / \bar{k}_n = 1.$
In addition, we introduce augmented information vectors $\widetilde{\mathcal{N}} = (\widetilde{\mathcal{N}}_{M_n, \widetilde{k}_n}(W))_{n=1}^{\infty},$ where
\begin{equation*}
    \widetilde{\mathcal{N}}_{M_n, \widetilde{k}_n}(W) = \bigoplus_{k=1}^{M_n}\,\big[W_k(\widetilde{t}_{0,n}), W_k(\widetilde{t}_{1,n}),\ldots, W_k(\widetilde{t}_{\,\widetilde{k}_n,n})\big].
\end{equation*}

For brevity, in the sequel we will use the notation $\Delta \widetilde{t}_{j,n} = \widetilde{t}_{j+1,n} - \widetilde{t}_{j,n}, \ n\in\N, \ j = 0,1,\ldots, \widetilde{k}_n-1.$

Now for fixed $n\in \N$ we estimate distance between truncated dimension time-continuous Euler process $\widetilde{X}_{M_n,\widetilde{k}_n}^{E}$ based on partition $\widetilde{\Delta}_{n}$ and the associated time-continuous conditional Euler process
$$X_{M_n, k_n}^{cond}(t) = \mathbb{E}\big(\widetilde{X}_{M_n,\widetilde{k}_n}^{E}(t) \big|\,\widetilde{\mathcal{N}}_{M_n,\widetilde{k}_n}(W)\big),$$
which leverages the augmented information $\widetilde{\mathcal{N}}_{M_n, \widetilde{k}_n}(W).$ We refer to, e.g., \cite{AKPP} for more details on conditional Euler process in finite dimensional setting when also jumps modelled by homogeneous Poisson process are considered.

We obtain
\begin{equation}\label{eq:euler_minus_cond}
    \begin{split}
        \big\|\widetilde{X}_{M_n,\widetilde{k}_n}^{E} - \mathbb{E}\big(\widetilde{X}_{M_n,\widetilde{k}_n}^{E} \big|\, \widetilde{\mathcal{N}}_{M_n,\widetilde{k}_n}(W)\big)\big\|_2^2 & = \mathbb{E}\int_0^T\sum_{j=0}^{\widetilde{k}_n-1}\sum_{k=1}^{M_n} \bigg(W_k(t) - \mathbb{E}\big(W_k(t)\big|\,\widetilde{\mathcal{N}}_{M_n,\widetilde{k}_n}(W)\big)\bigg)^2 \\
        & \quad\quad\quad \times(\sigma_k(\widetilde{t}_{j,n}))^2\one_{(\widetilde{t}_{j,n}, \widetilde{t}_{j+1,n}]}(t)\rd t \\
        & = \sum_{j=0}^{\widetilde{k}_n -1} \int_{\widetilde{t}_{j,n}}^{\widetilde{t}_{j+1,n}}\sum_{k=1}^{M_n} (\sigma_k(\widetilde{t}_{j,n}))^2  \ \mathbb{E}\big(\hat{W}_{j,k,n}(t)\big)^2 \rd t,
    \end{split}
\end{equation}
where $\hat{W}_{j,k,n}$ is a Brownian bridge on the interval $[\widetilde{t}_{j,n}, \widetilde{t}_{j+1,n}],$ conditioned on $W_k.$ For more details on Brownian bridge, we refer to e.g. \cite{PP1}. Furthermore,
\begin{equation}\label{eq:conditional_error}
    \begin{split}
        \big\|\widetilde{X}_{M_n,\widetilde{k}_n}^{E} - \mathbb{E}\big(\widetilde{X}_{M_n,\widetilde{k}_n}^{E} \big| \widetilde{\mathcal{N}}_{M_n,\widetilde{k}_n}(W)\big)\big\|_2^2& = \sum_{j=0}^{\widetilde{k}_n -1} \int_{\widetilde{t}_{j,n}}^{\widetilde{t}_{j+1,n}} \frac{(\widetilde{t}_{j+1,n}-t)(t-\widetilde{t}_{j,n})}{\Delta \widetilde{t}_{j,n}}\sum_{k=1}^{M_n} (\sigma_k(\widetilde{t}_{j,n}))^2 \rd t \\
        & = \frac{1}{6}\sum_{j=0}^{\widetilde{k}_n-1}\|\sigma^{M_n}(\widetilde{t}_{j,n})\|_{\ell^2}^2(\Delta \widetilde{t}_{j,n})^2.
    \end{split}
\end{equation}

Combining Proposition \ref{prop:infdim}, \eqref{eq:conditional_error}, and the fact that $\xalg$ is $\sigma(\bar{\mathcal{N}}_{M_n,\bar{k}_n}(W))$-measurable, we get
\begin{equation}\label{eq:err_lower_est_1}
    \begin{split}
        \big\|X - \xalg\big\|_2 &\geq \big\|\xalg - \widetilde{X}_{M_n,\widetilde{k}_n}^{E}\big\|_2 - \big\|X - \widetilde{X}_{M_n,\widetilde{k}_n}^{E}\big\|_2 \\
        &\geq \big\|\widetilde{X}_{M_n,\widetilde{k}_n}^{E} - \mathbb{E}\big(\widetilde{X}_{M_n,\widetilde{k}_n}^{E} \big|\, \bar{\mathcal{N}}_{M_n,\bar{k}_n}(W)\big)\big\|_2 - D_1m_n^{-1} - D_2\delta(M_n) \\
        &\geq \big\|\widetilde{X}_{M_n,\widetilde{k}_n}^{E} - \mathbb{E}\big(\widetilde{X}_{M_n,\widetilde{k}_n}^{E} \big|\, \widetilde{\mathcal{N}}_{M_n,\widetilde{k}_n}(W)\big)\big\|_2 - D_1m_n^{-1} - D_2\delta\big(M_n\big) \\
        & = \biggr(\frac{1}{6}\sum_{j=0}^{\widetilde{k}_n-1}\|\sigma^{M_n}(\widetilde{t}_{j,n})\|_{\ell^2}^2(\Delta \widetilde{t}_{j,n})^2\biggr)^{1/2} - D_1m_n^{-1} - D_2\delta\big(M_n\big),
    \end{split}
\end{equation}
where both $D_1, D_2$ do not depend on $n$ (explicitly or implicitly via $M_n$ or $\bar{k}_n, \widetilde{k}_n$).
Above, we also leveraged property $\bar{\mathcal{N}}_{M_n,\bar{k}_n}(W) \subset \widetilde{\mathcal{N}}_{M_n,\widetilde{k}_n}(W).$ 
Hence, by \eqref{eq:G_delta}, \eqref{eq:bar_kn}, \eqref{eq:err_lower_est_1}, and the definition of $m_n,$ we have
\begin{equation}\label{eq:err_lower_est_2}
    \begin{split}
        \liminf_{n \to +\infty} (M_n)^{-1/2}\big(\cost(\xalg)\big)^{1/2}\big\|X - \xalg\big\|_2 & \geq \liminf\limits_{n \to +\infty}\biggr(\frac{\bar{k}_n}{6}\sum_{j=0}^{\widetilde{k}_n -1}\|\sigma^{M_n}(\widetilde{t}_{j,n})\|_{\ell^2}^2 (\Delta \widetilde{t}_{j,n})^2\bigg)^{1/2} \\
        & - \limsup\limits_{n\to +\infty}\,\bar{k}_n^{1/2}\big(D_1m_n^{-1} + D_2\,\delta(M_n)\big) \\
        & \hspace{-0.5cm}= \liminf\limits_{n \to +\infty}\biggr(\frac{\bar{k}_n}{6}\sum_{j=0}^{\widetilde{k}_n -1}\|\sigma^{M_n}(\widetilde{t}_{j,n})\|_{\ell^2}^2 (\Delta \widetilde{t}_{j,n})^2\bigg)^{1/2}.
    \end{split}
\end{equation}
By Fact \ref{fact:integral}, \eqref{eq:err_lower_est_2}, the H\"{o}lder inequality, and the fact that $\widetilde{k}_n \approx \bar{k}_n, \ n\to +\infty,$ we arrive at
\begin{equation*}
    \begin{split}
        \liminf_{n \to +\infty} (M_n)^{-1/2}\big(\text{cost}(\xalg)\big)^{1/2}\big\|X - \xalg\big\|_2 & \geq \frac{1}{\sqrt{6}}\int_{0}^T \|\sigma(t)\|_{\ell^2}\rd t.
    \end{split}
\end{equation*}
Since
\begin{equation*}
        \liminf_{n \to +\infty} \frac{\big(\text{cost}(\xalg)\big)^{1/2}\big\|X - \xalg\big\|_2}{M_n^{1/2}\mathcal{C}_{noneq}} = \left(\limsup_{n \to +\infty} \frac{M_n^{1/2}\mathcal{C}_{noneq}}{\big(\text{cost}(\xalg)\big)^{1/2}\big\|X - \xalg\big\|_2}\right)^{-1},
\end{equation*}
we finally get for all $\xalg \in \chi_{noneq}$ that
\begin{equation}\label{eq:proof_lower_bound_noneq}
        \big(\text{cost}(\xalg)\big)^{1/2}\,\big\|X - \xalg\big\|_2 \gtrapprox M_n^{1/2}\,\mathcal{C}_{noneq}, \quad n\to +\infty.
\end{equation}
\medskip

For $\xalg \in \chi_{eq},$ the proof follows analogous steps, except for taking simply $\widetilde{\Delta}_n = \bar{\Delta}_n$ in \eqref{eq:bar_delta_partition}. Leveraging the fact that $\Delta \widetilde{t}_{j,n} = \frac{T}{\bar{k}_n}, \ j=0,\ldots, \bar{k}_n -1,$ leads to
\begin{equation}\label{eq:err_lower_est_3}
    \begin{split}
        \liminf_{n \to +\infty} (M_n)^{-1/2}\big(\cost(\xalg)\big)^{1/2}\big\|X - \xalg\big\|_2 & \geq
        \liminf\limits_{n \to +\infty}\biggr(\frac{1}{6}\sum_{j=0}^{\widetilde{k}_n -1}\|\sigma^{M_n}(\widetilde{t}_{j,n})\|_{\ell^2}^2 \Delta \widetilde{t}_{j,n}\bigg)^{1/2}.
    \end{split}
\end{equation}
Consequently, \eqref{eq:err_lower_est_3} implies that for all $\xalg \in \chi_{eq}$ it holds
\begin{equation}\label{eq:proof_lower_bound_eq}
        \big(\text{cost}(\xalg)\big)^{1/2}\,\big\|X - \xalg\big\|_2 \gtrapprox M_n^{1/2}\,\mathcal{C}_{eq}, \quad n\to +\infty.
\end{equation}
Considering \eqref{eq:proof_lower_bound_noneq} and \eqref{eq:proof_lower_bound_eq}, we are ready to formulate the main result of this section.
\medskip

\begin{theorem}\label{th:AC_lower}
Let us denote by $\chi_{noneq}^{\bar{M}}\subset \chi_{noneq}$ a set of all admissible methods with fixed truncation level sequence $\bar{M}.$ We have the following asymptotic lower bound 
\begin{equation}\label{th:AC_noneq}
        \inf\limits_{\overline{X} \in \chi_{noneq}^{\bar{M}}}\big(\cost(\xalg)\big)^{1/2}\big\| X - \xalg\big\|_2  \gtrapprox \frac{M_n^{1/2}}{\sqrt{6}}\int_{0}^T \|\sigma(t)\|_{\ell^2}\rd t.
\end{equation}
In particular, restriction to the subclass $\chi_{eq}$ gives sharper asymptotic lower bound
\begin{equation}\label{th:AC_eq}
        \inf\limits_{\overline{X} \in \chi_{noneq}^{\bar{M}}\,\cap\,\chi_{eq}}\big(\cost(\xalg)\big)^{1/2}\big\|X - \xalg \big\|_{2} \gtrapprox \sqrt{\frac{M_n T}{6}}\biggr(\int_{0}^T \|\sigma(t)\|_{\ell^2}^2\rd t \biggr)^{1/2}.
\end{equation}
\end{theorem}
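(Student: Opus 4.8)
The strategy is to read Theorem~\ref{th:AC_lower} as the passage to the infimum of the per-method estimates \eqref{eq:proof_lower_bound_noneq} and \eqref{eq:proof_lower_bound_eq}, which have already been established for an \emph{arbitrary} admissible method. The crucial structural feature is that the right-hand sides $M_n^{1/2}\mathcal{C}_{noneq}$ and $M_n^{1/2}\mathcal{C}_{eq}$ depend only on the fixed data $\bar{M}$ and $\sigma$, and in no way on the partition $\bar\Delta$, the information pattern $\bar{\mathcal{N}}$, or the reconstruction $\bar\phi$ that single out a given $\overline{X}\in\chi_{noneq}^{\bar M}$. Consequently, once \eqref{eq:proof_lower_bound_noneq} is known to hold for every member of the class, the same bound is inherited by $\inf_{\overline{X}\in\chi_{noneq}^{\bar M}}(\cost(\xalg))^{1/2}\|X-\xalg\|_2$, and likewise for the equidistant subclass.

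Concretely, first I would fix $\overline{X}\in\chi_{noneq}^{\bar M}$ and unwind \eqref{eq:proof_lower_bound_noneq}, i.e. $\liminf_{n}(\cost(\xalg))^{1/2}\|X-\xalg\|_2/(M_n^{1/2}\mathcal{C}_{noneq})\ge 1$. Since every admissible method obeys \eqref{eq:bar_kn}, we have $\bar k_n\to+\infty$, which is exactly what makes the augmentation \eqref{eq:bar_delta_partition} legitimate and drives the reduction \eqref{eq:err_lower_est_1}--\eqref{eq:err_lower_est_2}. I would then note that the two ingredients of that reduction are insensitive to the choice of method: the constants $D_1,D_2$ originate from Proposition~\ref{prop:infdim} and Proposition~\ref{prop:findim} and depend only on $C_1,C_2,\delta$, while the leading term collapses, after the Cauchy--Schwarz step and Fact~\ref{fact:integral}, to the method-independent limit $\mathcal{C}_{noneq}=\frac{1}{\sqrt 6}\int_0^T\|\sigma(t)\|_{\ell^2}\rd t$. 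Rewriting the $\liminf$ as in the display preceding \eqref{eq:proof_lower_bound_noneq} and taking the infimum over the class then gives \eqref{th:AC_noneq}.

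The main obstacle is to make this infimum passage rigorous, since $\bar k_n$ (via $\overline{C}_1,\overline{C}_2,n_0(\overline X)$) and the auxiliary scale $m_n$ entering \eqref{eq:bar_delta_partition} are themselves method-dependent, so the rate at which the correction $\bar k_n^{1/2}(D_1 m_n^{-1}+D_2\delta(M_n))$ vanishes and the Riemann sum $\sum_j\|\sigma^{M_n}(\widetilde t_{j,n})\|_{\ell^2}\Delta\widetilde t_{j,n}\to\int_0^T\|\sigma\|_{\ell^2}$ converges varies across the class. I would resolve this at the level of the asymptotic constant: for each fixed $\overline X$ condition \eqref{eq:bar_kn} gives $\bar k_n\le\overline{C}_1 n$, so $\bar k_n^{1/2}\delta(M_n)\le\overline{C}_1^{1/2}\,n^{1/2}\delta(M_n)\to 0$ by \eqref{eq:G_delta} (recall $\bar M\in\mathcal{G}_\delta$ is fixed), while $\bar k_n^{1/2}/m_n\to 0$ disposes of the remaining term, and uniform continuity of $t\mapsto\|\sigma(t)\|_{\ell^2}$ on $[0,T]$ together with (S3) controls the Riemann sum through the mesh $\le T/m_n$ and $\delta(M_n)$. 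Each admissible method therefore has asymptotic constant at least $\mathcal{C}_{noneq}$, and taking the infimum of these constants over $\chi_{noneq}^{\bar M}$ yields \eqref{th:AC_noneq}.

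Finally, the equidistant bound \eqref{th:AC_eq} follows along identical lines from \eqref{eq:proof_lower_bound_eq}, the only --- and decisive --- difference being that with $\widetilde\Delta_n=\bar\Delta_n$ and $\Delta\widetilde t_{j,n}=T/\bar k_n$ constant, the Cauchy--Schwarz step that degraded $\int_0^T\|\sigma\|_{\ell^2}^2$ into $(\int_0^T\|\sigma\|_{\ell^2})^2$ is no longer incurred; \eqref{eq:err_lower_est_3} instead leaves the genuine Riemann sum $\frac{T}{\bar k_n}\sum_j\|\sigma^{M_n}(\widetilde t_{j,n})\|_{\ell^2}^2\to\int_0^T\|\sigma(t)\|_{\ell^2}^2\rd t$. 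Passing to the infimum over $\chi_{noneq}^{\bar M}\cap\chi_{eq}$ as before produces the sharper constant $\mathcal{C}_{eq}=\sqrt{T/6}\,(\int_0^T\|\sigma\|_{\ell^2}^2\rd t)^{1/2}$, and the inequality $\mathcal{C}_{noneq}\le\mathcal{C}_{eq}$ recorded after \eqref{eq:AC_eq} confirms that restricting to equidistant meshes indeed yields the stronger lower bound.
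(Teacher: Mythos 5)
Your proposal is correct and follows essentially the same route as the paper: the theorem is obtained by reading off the per-method bounds \eqref{eq:proof_lower_bound_noneq} and \eqref{eq:proof_lower_bound_eq}, which the paper derives for an arbitrary $\overline{X}$ via the augmented partition \eqref{eq:bar_delta_partition}, the conditional (Brownian-bridge) Euler process computation \eqref{eq:conditional_error}, the triangle-inequality reduction \eqref{eq:err_lower_est_1}, the vanishing of the correction terms through \eqref{eq:G_delta} and \eqref{eq:bar_kn}, and the H\"older/Cauchy--Schwarz step combined with Fact~\ref{fact:integral}. Your handling of the infimum --- noting that the right-hand side is method-independent so each member of the class individually carries the asymptotic constant $\mathcal{C}_{noneq}$ (resp.\ $\mathcal{C}_{eq}$) --- matches the paper's intended reading of the statement.
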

\medskip

\begin{remark}
The restriction \eqref{eq:bar_kn} imposed on $\bar{k}_n$ is not limiting, as the majority of algorithms used in practice leverage $\mathcal{O}(n)$ nodes. We also stress that $\overline{C}_1, \overline{C}_2,$ as well as the related sequence $\bar{k}_n,$ depend on the method $\overline{X}$ and need not be the same across all considered algorithms. Furthermore, we allow only discrete, finite-dimensional evaluations of $W.$ Therefore, for all $n\in\mathbb{N},$ the vector $\mathcal{N}_{M_n, \bar{k}_n}$ is $\sigma\big(W_1, W_2,\ldots,W_{M_n}\big)$-measurable. Moreover, different partitions among various coordinates of $W$ are not permitted in our setting.
\end{remark}

\begin{remark}
We stress that while the sequences $\bar{\Delta}$ and $\bar{\phi}$ might depend on $a,\sigma,$ they cannot base on the trajectory of $W.$ The resulting information about Wiener process is then called non-adaptive, while the associated algorithms are referred to as path-independent. For path-dependent version of Euler algorithm in finite dimensional setting, see e.g., \cite{PP2}. 
\end{remark}

\begin{remark}
It should be noted that lower bounds in Theorem \ref{th:AC_lower} diverge to infinity when $n \to +\infty$ in both subclasses. This significantly differs from finite-dimensional case, when the truncation level sequence was bounded from above. That saying, when $M_n \equiv 1,$ we have $\|\sigma(t)\|_{\ell^2}^2 = |\sigma_1(t)|^2$ for all $t\in [0,T],$ and the asymptotic constant appearing in \eqref{th:AC_noneq} is consistent with the result from Theorem 2 in \cite{Muller}.
\end{remark}

Notably, for given sequence $\overline{M},$ in Section \ref{sec:equidistant} and Section \ref{sec:step_size} we construct algorithms which asymptotically attain lower bounds appearing in Theorem \ref{th:AC_lower}. Then, in Section \ref{sec:optimal_alg} we discuss the existence of optimal algorithms in a class of methods taking into account all permitted truncation sequences $\overline{M}.$

\section{Construction of asymptotically optimal methods}\vspace{0.2cm}

\subsection{Optimal algorithm in class of methods \texorpdfstring{$\chi_{eq}^{\bar{M}}$}{}}\label{sec:equidistant}

\phantom{a}\vspace{0.17cm}\newline
First, we fix $\bar{M} = (M_n^*)_{n=1}^{\infty}$ satisfying \eqref{eq:G_delta}.
Next, for any $n \in \mathbb{N}$ let us denote by $X_{M_n^*, n}^{Eq}\in \chi_{eq}$ the truncated dimension Euler method based on equidistant mesh $\Delta^{eq}_n: 0 = t_{0,n}^{eq} < \ldots <~t_{n,n}^{eq} =~T,$ where $t_{j,n}^{eq} = \frac{Tj}{n}, \ j=0,1,\ldots, n.$ The scheme is defined as follows:
\begin{equation}\label{euler_eq_scheme}
	\begin{cases}
		X_{M_n^*, n}^{Eq}(0) = x_0 \\
		X_{M_n^*, n}^{Eq}(t) = X_{M_n^*, n}^{Eq}(t_{j,n}^{eq}) + a\big(t_{j,n}^{eq} , X_{M_n^*, n}^{Eq}(t_{j,n}^{eq})\big)(t_{j+1,n}^{eq} - t_{j,n}^{eq}) \\
        \hspace{2.0cm} + \sigma^{M_n^*}(t_{j,n}^{eq})\big(W(t_{j+1,n}^{eq}) - W(t_{j,n}^{eq})\big), \quad\quad t \in [t_{j,n}, t_{j+1,n}],
		\ \ j=0,1,\ldots, n-1.
	\end{cases}
\end{equation}
The outcome of the method is a stochastic process $\big(X_{M_n^*, n}^{Eq*}(t)\big)_{t\in [0,T]}$ obtained by linear interpolation between two subsequent nodes $t_{j,n}^{eq}$ and $t_{j+1,n}^{eq},$ i.e., 
\begin{equation}
    X_{M_n^*,n}^{Eq*}(t) = \frac{X_{M_n^*,n}^{Eq}(t_{j,n}^{eq})(t_{j+1,n}^{eq} - t) + X_{M_n^*,n}^{Eq}(t_{j+1,n}^{eq})(t - t_{j,n}^{eq})}{t_{j+1,n}^{eq} - t_{j,n}^{eq}}, \quad t\in \big[t_{j,n}^{eq}, t_{j+1,n}^{eq}\big],
\end{equation}
$j=0,1,\ldots, n-1.$ Let us also denote by $\mathcal{N}_{M_n^*,n}^{eq}(W)$ the related information vector. In addition, by $\widetilde{X}_{M_n^*,n}^{E,eq}$ we understand the truncated dimension time-continuous Euler process based on the mesh $\Delta^{eq}_n,$ $n\in \mathbb{N}.$
Certainly, it holds
\begin{equation}\label{eq:equiv_optimal_1}
    n^{1/2}\bigg|\big\|X - X_{M_n^*,n}^{Eq*}\big\|_2 - \big\|\widetilde{X}_{M_n^*,n}^{E,eq} - X_{M_n^*,n}^{Eq*}\big\|_2\bigg| \leq n^{1/2}\, \big\|X - \widetilde{X}_{M_n^*,n}^{E,eq}\big\|_2.
\end{equation}
Consequently, from Theorem \ref{upper_bound} and \eqref{eq:equiv_optimal_1} it follows
\begin{equation}\label{eq:equiv_optimal_2}
    \begin{split}
    n^{1/2}\bigg|\big\|X - X_{M_n^*,n}^{Eq*}\big\|_2 - \big\|\widetilde{X}_{M_n^*,n}^{E,eq} - X_{M_n^*,n}^{Eq*}\big\|_2\bigg|
    \leq \frac{KT}{n^{1/2}} + Kn^{1/2}\delta(M_n^*).
    \end{split}
\end{equation}
Therefore, by repeating the reasoning as in \eqref{eq:conditional_error} and the fact that
\begin{equation*}
X_{M_n^*,n}^{Eq*} = \mathbb{E}\big(\widetilde{X}_{M_n^*,n}^{E,eq}\,|\,\mathcal{N}_{M_n^*,n}^{eq}(W)\big),
\end{equation*}
the inequality \eqref{eq:equiv_optimal_2} implies
\begin{equation}\label{eq:eq_limit_final}
    \begin{split}
    \lim\limits_{n\to+\infty}n^{1/2}\big\|X - X_{M_n^*,n}^{Eq*}\big\|_2 &= \lim\limits_{n\to+\infty}n^{1/2}\big\| \widetilde{X}_{M_n^*,n}^{E,eq} - \mathbb{E}\big(\widetilde{X}_{M_n^*,n}^{E,eq}\,|\, \mathcal{N}_{M_n^*,n}^{eq}(W)\big)\big\|_2 \\
    &= \lim\limits_{n\to+\infty}\biggr(\frac{T}{6}\sum_{j=0}^{n -1}\|\sigma^{M_n^*}(t_{j,n}^{eq})\|_{\ell^2}^2 \,\frac{T}{n}\bigg)^{1/2} \\
    & = \sqrt{\frac{T}{6}}\biggr(\int_{0}^T \|\sigma(t)\|_{\ell^2}^2\rd t \biggr)^{1/2}.
    \end{split}
\end{equation}
Finally, by \eqref{eq:eq_limit_final} we have
\begin{equation}
    \big(\cost(X_{M_n^*,n}^{Eq*})\big)^{1/2}\,\big\|X - X_{M_n^*,n}^{Eq*}\big\|_2 \approx \sqrt{\frac{M_n^*T}{6}}\biggr(\int_{0}^T \|\sigma(t)\|_{\ell^2}^2\rd t \biggr)^{1/2},
\end{equation}
which establishes the lower bound in Theorem \ref{th:AC_lower} for fixed class $\chi_{eq}^{\bar{M}}$. Note that $X_{M_n^*,n}^{Eq}$ is an implementable algorithm, and it does not require the knowledge of the trajectories of (truncated) Wiener process.
\subsection{Optimal algorithm with adaptive path-independent step-size control in class \texorpdfstring{$\chi_{noneq}^{\bar{M}}$.}{}}\label{sec:step_size}
\phantom{a}\vspace{0.17cm}\newline
In this section, we construct an optimal method in class $\chi_{noneq}^{\bar{M}}$ for fixed truncation level sequence $\bar{M} = (M_n^*)_{n=1}^{\infty}\in \mathcal{G}_\delta$. To this end, we define truncated dimension Euler scheme $(\xstep)_{n=1}^{\infty}$ with adaptive path-independent step-size of the following form.

First, let $\bar{\varepsilon} = (\varepsilon_n)_{n=1}^{\infty} \subset \R_+$ be a non-increasing sequence satisfying
\begin{equation}\label{eq:varepsilon_prop}
\lim\limits_{n \to +\infty}\varepsilon_n = \lim\limits_{n \to +\infty}\frac{1}{n \,\varepsilon_n^2} = 0.
\end{equation}
For fixed $n \in \mathbb{N}$ the proposed scheme utilises step-size control with $\hat{t}_{0,n} := 0$ and
\begin{equation}\label{eq:step_size}
    \hat{t}_{j+1,n} := \hat{t}_{j,n} + \frac{T}{n\max\{\varepsilon_n, \|\sigma^{M_n^*}(\hat{t}_{j,n})\|_{\ell^2}\}}, \quad j=0,1,\ldots, k_n^*-1,
\end{equation}
where $k_n^* = \inf\{j \in \mathbb{N} \ | \ \hat{t}_{j,n} \geq T \},$ $n\in\mathbb{N}.$ We will denote the corresponding mesh by $\hat{\Delta}_n.$ Then, we set
\begin{equation*}
	\begin{cases}
		\xstep(0) = x_0 \\
		\xstep(\hat{t}_{j+1,n}) = \xstep(\hat{t}_{j,n}) + a(\hat{t}_{j,n} , \xstep(\hat{t}_{j,n}))(\hat{t}_{j+1,n} - \hat{t}_{j,n}) \\ \hspace{5.14cm}+ \ \sigma^{M_n^*}(\hat{t}_{j,n})(W(\hat{t}_{j+1,n}) - W(\hat{t}_{j,n})), \ \ 
		\\ \quad\quad\quad\quad j=0,1,\ldots, k_n^*-1.
	\end{cases}
\end{equation*}
In the sequel, we will use the notation $\Delta^*_n = \hat{\Delta}_n \setminus \{\hat{t}_{k_n^*,n}\} \cup \{T\}$ and $t_{j,n}^* = \hat{t}_{j,n}, \ j=0,1,\ldots, \linebreak k_n^*-1,$ $t_{k_n^*,n}^*=T.$ The corresponding information vector will be denoted by $\mathcal{N}_{M_n^*,k_n^*}^*(W).$ Also, by $\Delta \hat{t}_{j,n},$ $\Delta t_{j,n}^*$ we will understand the $j$--th time step for the discretisation $\hat{\Delta}_{n}$ and $\Delta_{n}^*,$ respectively. 
\smallskip

The final process $X^{*}_{M_n^*,k_n^*} = \big(X^{*}_{M_n^*,k_n^*}(t)\big)_{t\in [0,T]}$ approximating $X$ is obtained by piecewise linear interpolation between $\xstep({t}_{j,n}^*)$ and $\xstep({t}_{j+1,n}^*).$
We recall that in our model~\eqref{main_equation}, the process $X^{*}_{M_n^*,k_n^*}$ coincides with the time-continuous conditional Euler process
\begin{equation}\label{eq:stepsize_vs_conditional}
    \widetilde{X}_{M_n^*,k_n^*}^{cond*}(t) := \mathbb{E}\big(\,\widetilde{X}_{M_n^*,k_n^*}^{E}(t) \ | \ \mathcal{N}_{M_n^*,k_n^*}^*(W)\big), \quad t\ \in [0,T],
\end{equation}
based on the sequence of partitions $\Delta^*_n.$
    
\begin{fact}\label{fact:stepsize}
\phantom{a}\newline\vspace{-0.61cm}
    \begin{itemize}
        \item [a) ] \hspace{-0.3cm} The proposed method $\xstep$ with adaptive step-size control is an element of $\chi_{noneq}$ and attains point $T,$ irrespective of prior choice of the sequences $\bar{M}$ and $\bar{\varepsilon}.$
        \item [b) ] $k_n^*$ is deterministic and $\lim\limits_{n\to +\infty} k_n^*(\sigma) = +\infty.$
        \item [c) ] \hspace{-0.3cm} $\max\limits_{0\leq j \leq k_n^*-1} (t_{j+1,n}^* - t_{j,n}^*) \leq \frac{T}{n\varepsilon_n} \to 0, \ \ n \to +\infty.$
    \end{itemize}
\end{fact}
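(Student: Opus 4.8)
The plan is to derive two-sided bounds on the individual step-sizes $\Delta\hat{t}_{j,n}$ and then read off all three assertions from these bounds together with the defining conditions \eqref{eq:varepsilon_prop}. Since $\max\{\varepsilon_n,\|\sigma^{M_n^*}(\hat{t}_{j,n})\|_{\ell^2}\}\geq\varepsilon_n$, formula \eqref{eq:step_size} gives at once the upper bound $\Delta\hat{t}_{j,n}\leq T/(n\varepsilon_n)$, which is precisely the first inequality in c). For a matching positive lower bound I would use that $\sigma$ is bounded on $[0,T]$: by (S1) and (S2), $\|\sigma(t)\|_{\ell^2}\leq\|\sigma(0)\|_{\ell^2}+C_2 t\leq C_2(1+T)$, and since $\sigma^{M}=P_M\sigma$ is a coordinate projection we have $\|\sigma^{M_n^*}(t)\|_{\ell^2}\leq\|\sigma(t)\|_{\ell^2}\leq C_2(1+T)$. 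Combined with $\varepsilon_n\leq\varepsilon_1$ (the sequence being non-increasing), the denominator in \eqref{eq:step_size} is at most $nB$ with $B:=\max\{\varepsilon_1,C_2(1+T)\}$, so $\Delta\hat{t}_{j,n}\geq T/(nB)>0$ for every $j$.

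For part c) it then only remains to note $T/(n\varepsilon_n)\to 0$. This follows from \eqref{eq:varepsilon_prop}: writing $n\varepsilon_n=(n\varepsilon_n^2)/\varepsilon_n$ with numerator $\to+\infty$ and denominator $\to 0$ gives $n\varepsilon_n\to+\infty$. For the first claim in a), the uniform step floor forces finiteness: because $\hat{t}_{k_n^*-1,n}<T$ by definition of $k_n^*$, summing the lower step bound over the first $k_n^*-1$ steps yields $(k_n^*-1)\,T/(nB)\leq\hat{t}_{k_n^*-1,n}<T$, hence $k_n^*<nB+1<\infty$. Thus the mesh reaches $T$ in finitely many steps for every choice of $\bar{M}$ and $\bar{\varepsilon}$.

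To establish $\xstep\in\chi_{noneq}$ I would check the structural requirements: the mesh $\Delta_n^*$ is strictly increasing by construction; the information $\mathcal{N}_{M_n^*,k_n^*}^*(W)$ uses only finitely many coordinates with $\bar{M}\in\mathcal{G}_\delta$ by hypothesis; the reconstruction map is piecewise linear interpolation, which is Borel measurable; and the node count obeys \eqref{eq:bar_kn}. The upper part $k_n^*\leq\overline{C}_1 n$ is the estimate $k_n^*<nB+1$ just obtained. For the lower part, summing the step ceiling gives $T\leq\hat{t}_{k_n^*,n}\leq k_n^*\,T/(n\varepsilon_n)$, so $k_n^*\geq n\varepsilon_n$; and since \eqref{eq:varepsilon_prop} yields $\sqrt{n}\,\varepsilon_n=\sqrt{n\varepsilon_n^2}\to+\infty$, we get $k_n^*\geq n\varepsilon_n=\sqrt{n}\,(\sqrt{n}\,\varepsilon_n)\geq\overline{C}_2\sqrt{n}$ for every $\overline{C}_2$ and all large $n$. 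For part b), that $k_n^*$ is deterministic follows by induction on $j$: recursion \eqref{eq:step_size} defines $\hat{t}_{j+1,n}$ solely from $\hat{t}_{j,n}$ and the deterministic data $\sigma,T,n,M_n^*,\varepsilon_n$, with no dependence on the trajectory of $W$, so each node and hence $k_n^*$ is deterministic; the divergence $k_n^*\to+\infty$ is the bound $k_n^*\geq n\varepsilon_n\to+\infty$ already derived.

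The main obstacle is the lower bound on the node count required for membership in $\chi_{noneq}$: one must convert the step ceiling $T/(n\varepsilon_n)$ into $k_n^*\geq n\varepsilon_n$ and then invoke the precise growth condition $1/(n\varepsilon_n^2)\to 0$ — equivalently $\sqrt{n}\,\varepsilon_n\to+\infty$ — to secure $k_n^*\geq\overline{C}_2\sqrt{n}$. The remaining estimates are elementary, but this is exactly the place where the two constraints bundled in \eqref{eq:varepsilon_prop} must be played against each other, since $\varepsilon_n$ must vanish (for c)) yet not too fast (for the $\sqrt{n}$ lower bound).
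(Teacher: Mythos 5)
Your proof is correct and follows essentially the same route as the paper: a uniform bound $\|\sigma^{M}(t)\|_{\ell^2}\leq \hat C$ yields a positive floor on the step-size (hence finiteness of $k_n^*$ and $k_n^*=\mathcal{O}(n)$), while the ceiling $T/(n\varepsilon_n)$ yields $k_n^*\geq n\varepsilon_n$, and the two conditions in \eqref{eq:varepsilon_prop} are played against each other exactly as in the paper's argument. Your write-up is in fact somewhat more explicit than the paper's (which only sketches a) and b)), e.g.\ in verifying the $\overline{C}_2 n^{1/2}$ lower bound of \eqref{eq:bar_kn} and the measurability requirements for membership in $\chi_{noneq}$.
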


\begin{proof}
Since for every $M,n \in \mathbb{N}$ and $j = 0, \ldots, k_n^*$ we have
\begin{equation*}
    \|\sigma^M (\hat{t}_{j,n})\|_{\ell^2} \leq C_2\delta(M) + \|\sigma (\hat{t}_{j,n}) - \sigma(\hat{t}_{0,n})\|_{\ell^2} \leq C\|\delta\|_{\ell^\infty(\mathbb{R})} + C_2T =: \hat{C} < +\infty,
\end{equation*}
for $n_0 = \lfloor n (\varepsilon_n + \hat{C})\rfloor + 1$ it holds $\hat{t}_{n_0,n} \geq T.$ Combining this, the fact that $n\varepsilon_n = o(n),$ and $n\varepsilon_n = \Omega(n^{1/2})$ by \eqref{eq:varepsilon_prop}, we have that $\frac{1}{T}n\varepsilon_n^2 \leq \frac{1}{T}n\varepsilon_n \leq k_n^* \leq n_0(n),$ $n\in\mathbb{N}.$ This proves both assertions a) and~b).
\end{proof}

Now we investigate asymptotic behaviour of the method error. Let us denote
\begin{equation}\label{eq:stepsize_s_star_def}
    \widetilde{S}_{M_n^*,n}^{\,l} :=\sum_{j = 0}^{k_n^* - 1}\max\left\{\varepsilon_n^l, \big\|\sigma^{M_n^*}(t_{j,n}^*)\big\|_{\ell^2}^{l}\right\}(t_{j+1,n}^* - t_{j,n}^*)^l, \quad l\in\{1,2\},
\end{equation}
and
\begin{equation}\label{eq:stepsize_s_dash_def}
    \hat{S}_{M_n^*,n}^{\,l} :=\sum_{j = 0}^{k_n^* - 1}\max\left\{\varepsilon_n^l,\big\|\sigma^{M_n^*}(\hat{t}_{j,n})\big\|_{\ell^2}^l\right\}(\hat{t}_{j+1,n} - \hat{t}_{j,n})^l, \quad l \in \{1,2\}.
\end{equation}
By Fact \ref{fact:integral} we get that
\begin{equation}\label{eq:step_sums1}
    \left|\sum_{j = 0}^{k_n^* - 1} \big\|\sigma^{M_n^*}(t_{j,n}^*)\big\|_{\ell^2}\,\Delta t_{j,n}^* - \int_{0}^T \|\sigma(t)\|_{\ell^2}\,\text{d}t\right| \leq K_1 \left(\frac{T}{n\varepsilon_n} + \delta(M_n^*)\right).
\end{equation}
Consequently, from \eqref{eq:stepsize_s_star_def}, \eqref{eq:step_sums1}, and the fact that for all $a,b \geq 0$
\begin{equation*}
    \frac{a-b}{2} \leq \max\{a,b\} - b \leq a,
\end{equation*}
we obtain
\begin{equation}\label{eq:step_sums2}
\begin{split}
    \biggr|\widetilde{S}_{M_n^*,n}^{\,1} - \int_{0}^T \|\sigma(t)\|_{\ell^2}\,\text{d}t\biggr| & \leq \biggr|\sum_{j=0}^{k_n^* - 1}\bigg(\max\left\{\varepsilon_n,\big\|\sigma^{M_n^*}(t_{j,n}^*)\big\|_{\ell^2}\right\} - \big\|\sigma^{M_n^*}(t_{j,n}^*)\big\|_{\ell^2}\bigg)\,\Delta t_{j,n}^*\biggr| \\
    & \hspace{0.55cm}+ \biggr|\sum_{j = 0}^{k_n^* - 1} \big\|\sigma^{M_n^*}(t_{j,n}^*)\big\|_{\ell^2}\,\Delta t_{j,n}^* - \int_{0}^T \|\sigma(t)\|_{\ell^2}\,\text{d}t\biggr| \\
    & \leq \sum_{j=0}^{k_n^* - 1} \varepsilon_n \Delta t_{j,n}^* + K_1\left(\frac{T}{n\varepsilon_n} + \delta(M_n^*)\right) \\ 
    &\leq K_2 \Big(\varepsilon_n + (n\varepsilon_n)^{-1} + \delta(M_n^*)\Big).
    \end{split}
\end{equation}
Since from \eqref{eq:stepsize_s_star_def} and \eqref{eq:stepsize_s_dash_def}
\begin{equation*}
    \Big|\widetilde{S}_{M_n^*,n}^{\,1} - \hat{S}_{M_n^*,n}^{\,1}\Big| = \max\left\{\varepsilon_n, \big\|\sigma^{M_n^*}(\hat{t}_{k_{n}^*-1,n})\big\|_{\ell^2}\right\}\big(\hat{t}_{k_{n}^*,n} - T\big) \leq \frac{K_4}{n\varepsilon_n},
\end{equation*}
the inequality \eqref{eq:step_sums2} implies
\begin{equation}\label{eq:step_sums3}
    \biggr|\hat{S}_{M_n^*,n}^{\,1} -  \int_{0}^T \|\sigma(t)\|_{\ell^2}\,\text{d}t\,\biggr| \leq K_5\Big(\varepsilon_n + (n\varepsilon_n)^{-1} + \delta(M_n^*)\Big).
\end{equation}
Also, note that \eqref{eq:step_size} results in
\begin{equation}\label{eq:step_sum_hat}
    \hat{S}_{M_n^*,n}^{\,l} = \sum_{j = 0}^{k_n^* - 1}\left(\frac{T}{n}\right)^l = k_n^*\left(\frac{T}{n}\right)^l, \quad l \in \{1,2\}.
\end{equation}
Consequently, since $k_n^* = \mathcal{O}(n)$ by Fact \ref{fact:stepsize}, from \eqref{eq:step_sum_hat} it follows
\begin{equation}\label{eq:bound_s_sum}
\hat{S}_{M_n^*,n}^{\,l} \leq \tilde{C}, \quad n\in\mathbb{N}, \ l=1,2,
\end{equation}
for some $\tilde{C}>0.$
Consequently, \eqref{eq:step_sums3} together with \eqref{eq:step_sum_hat} yield
\begin{equation}\label{eq:step_sum1_integral}
    \biggr|\frac{k_n^*\,T}{n} - \int_{0}^T \|\sigma(t)\|_{\ell^2}\,\text{d}t\,\biggr| \leq K_5\Big(\varepsilon_n + (n\varepsilon_n)^{-1} + \delta(M_n^*)\Big).
\end{equation}
Furthermore, by \eqref{eq:step_sum_hat} and \eqref{eq:step_sum1_integral} we arrive at
\begin{equation}\label{eq:step_sums_3}
    \begin{split}
    \biggr|k_n^*\,\hat{S}_{M_n^*,n}^{\,2} - \left(\int_{0}^T \|\sigma(t)\|_{\ell^2}\,\text{d}t\right)^2 \biggr| &\leq\, \biggr|\hat{S}_{M_n^*,n}^{\,1} +  \int_{0}^T \|\sigma(t)\|_{\ell^2}\,\text{d}t\,\biggr|
    \times K_5\Big(\varepsilon_n + (n\varepsilon_n)^{-1} + \delta(M_n^*)\Big) \\
    & \leq K_6\Big(\varepsilon_n + (n\varepsilon_n)^{-1} + \delta(M_n^*)\Big),
    \end{split}
\end{equation}
where $K_6$ does not depend on $n$ on the virtue of \eqref{eq:bound_s_sum}.
Since $t_{k_n^*,n}^* = T,$ we also have
\begin{equation}\label{eq:step_snkn_diff}
\begin{split}
    \Big|k_n^* \hat{S}_{M_n^*,n}^{\,2} - k_n^* \widetilde{S}_{M_n^*,n}^{\,2}\Big| & = k_n^* \max\left\{\varepsilon_n^2, \big\|\sigma^{M_n^*}(\hat{t}_{k_n^*-1,n})\big\|_{\ell^2}^{2}\right\}\Big((\hat{t}_{k_n^*,n} - \hat{t}_{k_n^*-1,n})^2 - (T - t_{k_n^*-1,n}^*)^2\Big) \\
    & \leq 2k_n^* \max\left\{\varepsilon_n^2, \big\|\sigma^{M_n^*}(\hat{t}_{k_n^*-1,n})\big\|_{\ell^2}^{2}\right\}\big(\hat{t}_{k_n^*,n} - \hat{t}_{k_n^*-1,n}\big)^2.
    \end{split}
\end{equation}
By \eqref{eq:step_sums_3}, \eqref{eq:step_snkn_diff}, and Fact \ref{fact:stepsize} we obtain
\begin{equation}\label{eq:step_snkn_diff_2}
    \Big|k_n^* \hat{S}_{M_n^*,n}^{\,2} - k_n^* \widetilde{S}_{M_n^*,n}^{\,2}\Big| \leq K_7 \frac{k_n^*}{(n\varepsilon_n)^2} = K_7 \frac{k_n^*}{n}\,\frac{1}{n\varepsilon_n^2}\leq K_8 (n\varepsilon_n^2)^{-1}.
\end{equation}
Combining \eqref{eq:step_sums_3} and \eqref{eq:step_snkn_diff_2} results in
\begin{equation}\label{eq:step_limit_square}
\begin{split}
    \biggr|k_n^*\,\widetilde{S}_{M_n^*,n}^{\,2} - \left(\int_{0}^T \|\sigma(t)\|_{\ell^2}\,\text{d}t\right)^2\biggr| & \leq K_6\Big(\varepsilon_n + (n\varepsilon_n)^{-1} + \delta(M_n^*)\Big) + K_8 (n\varepsilon_n^2)^{-1} \\
    & \leq K_9\Big(\varepsilon_n + (n\varepsilon_n^2)^{-1} + \delta(M_n^*)\Big).
    \end{split}
\end{equation}
On the other hand, by Proposition \ref{upper_bound}, Fact \ref{fact:stepsize}, and \eqref{eq:stepsize_vs_conditional} we have
\begin{equation}\label{eq:step_limit_change}
\begin{split}
    (k_n^*)^{1/2}\bigg|\big\|X - X_{M_n^*, k_n^*}^* \big\|_2 - \big\|\widetilde{X}_{M_n^*, k_n^*}^E &- \mathbb{E}\big(\widetilde{X}_{M_n^*, k_n^*}^E \big| \mathcal{N}_{M_n^*, k_n^*}^*(W)\big)\big\|_2 \bigg| \phantom{expression}\\
    & \leq (k_n^*)^{1/2}\,\big\|X -  \widetilde{X}_{M_n^*, k_n^*}^E\big\|_2 \\
    & \leq K_{10}\Big((n\varepsilon_n^2)^{-1} + (k_n^*)^{1/2}\delta(M_n^*)\Big).
    \end{split}
\end{equation}
for some $K_{10}>0$ which depends only on the constants $C_1, C_2, T.$
 
Therefore, from \eqref{eq:G_delta}, \eqref{eq:varepsilon_prop}, and \eqref{eq:step_limit_change} it follows
\begin{equation}\label{eq:step_limit_switch}
    \lim\limits_{n\to+\infty} (k_n^*)^{1/2}\,\big\|X - X_{M_n^*, k_n^*}^* \big\|_2 =  \lim\limits_{n\to+\infty}(k_n^*)^{1/2}\,\big\|\widetilde{X}_{M_n^*, k_n^*}^E - \mathbb{E}\big(\widetilde{X}_{M_n^*, k_n^*}^E \big| \mathcal{N}_{M_n^*, k_n^*}^*(W)\big)\big\|_2.
\end{equation}
Rewriting \eqref{eq:step_limit_switch} analogously as in \eqref{eq:euler_minus_cond} and \eqref{eq:conditional_error}, we conclude that \eqref{eq:step_limit_square} and \eqref{eq:step_limit_switch} imply
\begin{equation}\label{eq:step_cnoneq_limit_1}
    \lim\limits_{n\to+\infty} (k_n^*)^{1/2}\,\big\|X - X_{M_n^*, k_n^*}^* \big\|_2 =  \lim\limits_{n\to+\infty} \biggr(\frac{k_n^*}{6}\,\tilde{S}^2_{M_n^*,n}\biggr)^{1/2} = \mathcal{C}_{noneq}.
\end{equation}
Hence, \eqref{eq:step_cnoneq_limit_1} yields
\begin{equation*}
    \big(\cost(X^{step}_{M_n^*,k_n^*})\big)^{1/2}\,\big\|X - X^{*}_{M_n^*,k_n^*}\big\|_2 \approx (M_n^*)^{1/2}\,\mathcal{C}_{noneq}, \quad n\to +\infty,
\end{equation*}
which establishes lower bound in \eqref{th:AC_noneq} for class $\chi_{noneq}^{\bar{M}}.$

\subsection{Asymptotically (almost) optimal algorithms for classes  \texorpdfstring{$\chi_{eq}, \chi_{noneq}$.}{}}\label{sec:optimal_alg}
\phantom{a}\vspace{0.17cm}\newline
In this subsection we extend optimality results obtained for fixed truncation level sequences $\bar{M}$ to the general classes of considered methods $\chi_{eq}, \chi_{noneq}$. Our final conclusions are gathered in Theorem \ref{th:AC_final}.

\begin{theorem}\label{th:AC_final}
Let $a,\sigma$ satisfy conditions (A1)-(A4) and (S1)-(S3) with sequence $\delta$, respectively. Let also $\diamond \in \{noneq, eq\}.$ Then, for every method $\bar{X}=(\xalg)_{n=1}^{\infty} \in \chi_{\diamond}$ we have
\vspace{0.05cm}
\begin{equation}\label{th:final_AC}
        \big(\cost(\xalg)\big)^{1/2}\,\big\| X - \xalg\big\|_2  \,\gtrapprox\,\big(\delta^{-1}(n^{-1/2})\big)^{1/2}\,\mathcal{C}_{\diamond}, \quad n\to +\infty.
\end{equation}

\vspace{0.09cm}
\noindent Moreover, for every truncation level sequence $M_n$ with $\delta^{-1}(n^{-1/2})=o(M_n), \ n\to +\infty,$  there exists a sequence $M^* = (M^*_n)_{n=1}^{\infty}\in\mathcal{G}_{\delta}$ such that $M_n^* = o (M_n), \ n\to+\infty,$ and:
\smallskip

a) the truncated-dimension Euler algorithm with adaptive path-independent step-size $X^{*} = \big(X^*_{M_n^*, k_n^*}\big)_{n=1}^{\infty}\in \chi_{noneq}$ satisfying
\begin{equation}\label{th:final_AC_noneq}
        \big(\cost(X_{M_n^*,k_n^*}^*)\big)^{1/2}\big\| X - X_{M_n^*,k_n^*}^*\big\|_2  \,\lessapprox\,\sqrt{\frac{M_n^*}{6}}\,\int_{0}^T \|\sigma(t)\|_{\ell^2}\,\rd t, \quad n\to +\infty;
\end{equation}
\smallskip

b) the truncated-dimension Euler algorithm $X^{Eq*} = \big(X^{Eq*}_{M_n^*, n}\big)_{n=1}^{\infty}\in \chi_{eq},$ based on the sequence of equidistant meshes, and satisfying
\begin{equation}\label{th:final_AC_eq}
        \big(\cost(X^{Eq*}_{M_n^*, n})\big)^{1/2}\big\| X - X^{Eq*}_{M_n^*, n}\big\|_2  \,\lessapprox\,\sqrt{\frac{M_n^* T}{6}}\,\left(\int_{0}^T \|\sigma(t)\|_{\ell^2}^2\,\rd t\right)^{1/2}, \quad n\to +\infty.
\end{equation}
\end{theorem}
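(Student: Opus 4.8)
The plan is to prove the two assertions separately: first the universal lower bound \eqref{th:final_AC}, which follows directly from Theorem \ref{th:AC_lower} together with the defining property \eqref{eq:G_delta} of $\mathcal{G}_\delta$; and then the almost-matching upper bounds, which—once a suitable truncation sequence $M^*$ has been produced—reduce to the per-class optimality already established in Sections \ref{sec:equidistant} and \ref{sec:step_size}.

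For the lower bound I would fix $\diamond\in\{noneq,eq\}$ and take an arbitrary method $\overline{X}=(\bar{\Delta},\bar{\mathcal{N}},\bar{M},\bar{\phi})\in\chi_\diamond$ with truncation sequence $\bar{M}=(M_n)\in\mathcal{G}_\delta$. By \eqref{eq:G_delta} we have $n^{1/2}\delta(M_n)\to 0$, so $\delta(M_n)<n^{-1/2}$ for all large $n$; since $\{k:\delta(k)>n^{-1/2}\}$ is an initial segment of $\mathbb{N}$, this forces $M_n>\delta^{-1}(n^{-1/2})$ eventually, whence $M_n^{1/2}\geq\big(\delta^{-1}(n^{-1/2})\big)^{1/2}$. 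Plugging this into the class lower bound \eqref{eq:proof_lower_bound_noneq} (respectively \eqref{eq:proof_lower_bound_eq}) and noting that the extra factor $M_n^{1/2}/\big(\delta^{-1}(n^{-1/2})\big)^{1/2}\geq 1$ can only raise the $\liminf$ of the relevant ratio, I obtain \eqref{th:final_AC}. I expect this part to be routine.

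The second assertion I would reduce entirely to the construction of a sequence $M^*=(M^*_n)\in\mathcal{G}_\delta$ with $M^*_n=o(M_n)$. Once such an $M^*$ is in hand, part a) comes from running the adaptive path-independent step-size scheme of Section \ref{sec:step_size} with this truncation sequence: as $M^*\in\mathcal{G}_\delta$, the analysis there gives $\big(\cost(X^*_{M^*_n,k^*_n})\big)^{1/2}\|X-X^*_{M^*_n,k^*_n}\|_2\approx (M^*_n)^{1/2}\mathcal{C}_{noneq}$, which is exactly the right-hand side of \eqref{th:final_AC_noneq}, and $\approx$ implies $\lessapprox$. Part b) is identical, invoking instead the equidistant scheme $X^{Eq*}_{M^*_n,n}$ of Section \ref{sec:equidistant}, whose error satisfies $\approx (M^*_n)^{1/2}\mathcal{C}_{eq}$, matching \eqref{th:final_AC_eq}.

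The crux—and the step I expect to be the main obstacle—is therefore the construction of $M^*$: an element of $\mathcal{G}_\delta$ whose growth is simultaneously large enough to stay admissible ($n^{1/2}\delta(M^*_n)\to 0$) and small enough to be $o(M_n)$. A naive interpolation, e.g. a geometric mean of $\delta^{-1}(n^{-1/2})$ and $M_n$, fails in general, since $\delta^{-1}$ may be either slowly or rapidly varying. The robust choice is to stay just above the threshold: set $M^*_n:=\delta^{-1}\big(n^{-1/2}/r_n\big)+1$, so that $\delta(M^*_n)\leq n^{-1/2}/r_n$ and hence $n^{1/2}\delta(M^*_n)\leq 1/r_n$, giving $M^*\in\mathcal{G}_\delta$ as soon as $r_n\to\infty$, irrespective of $\delta$. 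The only remaining requirement, $M^*_n=o(M_n)$, would then be secured by a diagonal argument: for each fixed $r$ one verifies $\delta^{-1}(n^{-1/2}/r)=o(M_n)$ and then lets $r=r_n\to\infty$ slowly (keeping $r_n\leq m$ until $\delta^{-1}(n^{-1/2}/m)/M_n$ falls below $1/m$), with monotonicity of $M^*_n$ arranged by passing to running maxima. Establishing the fixed-$r$ estimate $\delta^{-1}(n^{-1/2}/r)=o(M_n)$ \emph{without} any regularity assumption on $\delta$ is the delicate point, and it is precisely here that both hypotheses are used: the membership $M_n\in\mathcal{G}_\delta$ handles the rapidly varying regime, while the gap hypothesis $\delta^{-1}(n^{-1/2})=o(M_n)$ handles the slowly varying one.
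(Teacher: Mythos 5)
Your treatment of the lower bound \eqref{th:final_AC} and your reduction of parts a) and b) to the existence of a sequence $M^*\in\mathcal{G}_\delta$ with $M^*_n=o(M_n)$ both match the paper. The divergence, and the gap, is in the construction of $M^*$. The fixed-$r$ estimate on which your diagonal argument rests, namely $\delta^{-1}(n^{-1/2}/r)=o(M_n)$ for each fixed $r>1$, does not follow from the two hypotheses $M\in\mathcal{G}_\delta$ and $\delta^{-1}(n^{-1/2})=o(M_n)$. Those hypotheses control $\delta^{-1}$ at the threshold $n^{-1/2}$ (with a genuine $o$) and below every threshold $cn^{-1/2}$ (but only with the trivial bound $\delta^{-1}(cn^{-1/2})<M_n$ that follows from $n^{1/2}\delta(M_n)\to0$); neither excludes a jump of $\delta^{-1}$ of relative size comparable to $M_n$ occurring inside the window $(n^{-1/2}/r,\,n^{-1/2})$.

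Concretely: pick $n_m\to\infty$ very fast, set $\eta_{m+1}=n_m^{-1/2}/\sqrt{2}$, and let $\delta$ take values just above $\eta_{m+1}$ on the index block $(B_{m-1},B_m]$ with $B_m\gg B_{m-1}\geq n_m$, so that $\delta^{-1}(\epsilon)$ jumps from $B_{m-1}$ to $B_m$ as $\epsilon$ crosses $\eta_{m+1}$, a level lying strictly between $n_m^{-1/2}/2$ and $n_m^{-1/2}$. Take $M_n=2B_m$ for $n\in[n_m,2n_m)$ and let $M_n$ climb fast enough on the rest of $[n_m,n_{m+1})$. One checks that $M\in\mathcal{G}_\delta$ and $\delta^{-1}(n^{-1/2})=o(M_n)$ (since $\delta^{-1}(n_m^{-1/2})\approx B_{m-1}=o(B_m)$), yet $\delta^{-1}(n_m^{-1/2}/2)=B_m=M_{n_m}/2$, so your fixed-$r$ claim already fails for $r=2$. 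Worse, any $M^*\in\mathcal{G}_\delta$ must eventually satisfy $\delta(M^*_n)<n^{-1/2}/2$ and hence $M^*_{n_m}>B_m=M_{n_m}/2$, so for this pair $(\delta,M)$ no admissible $M^*$ with $M^*_n=o(M_n)$ exists at all: the obstruction is intrinsic, not an artifact of your particular ansatz. For comparison, the paper proceeds differently, taking $M^*_n=M_n\big(\log^{(k)}(n)\big)^{-1}$ for a sufficiently high iterated logarithm; this bypasses your fixed-$r$ lemma but its assertion that the quotient remains in $\mathcal{G}_\delta$ runs into the same example. A repair requires a regularity assumption on $\delta$, e.g.\ $\delta^{-1}(\epsilon/2)=O(\delta^{-1}(\epsilon))$ as $\epsilon\to 0^+$ (which covers both the polynomial and exponential regimes you mention), and under such an assumption your choice $M^*_n=\delta^{-1}(n^{-1/2}/r_n)+1$ with $r_n\to\infty$ slowly does go through.
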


\medskip

\begin{proof}
First, we note that \eqref{eq:G_delta} and monotonicity of $\delta^{-1}$ imply that for every admissible truncation level sequence $\bar{M} = (M_n)_{n=1}^{\infty}\in \mathcal{G}_{\delta}$ it holds $M_n \gtrapprox \delta^{-1}\big(n^{-1/2}\big)$. This in turn implies that \eqref{th:final_AC} is satified for every $\bar{M}$ and method $\overline{X}_{M_n, n}$ in $\chi_{noneq}$ and $\chi_{eq},$ respectively. 

However, the lower bound in \eqref{th:final_AC} cannot be asymptotically attained by any algorithm. Otherwise, the truncation level would satisfy $M_n \approx \delta^{-1}(n^{-1/2}),$ which in turn would violate the property \eqref{eq:G_delta}. 
Nevertheless, for every sequence $\bar{M}:=(M_n)_{n=1}^{\infty}\in\mathcal{G}_\delta$ there exists a constant $k_0^{\bar{M}} = k_0(\bar{M}) \in \mathbb{N}$ such that for every $k\geq k_0^{\bar{M}}$ we have
\begin{equation}\label{eq:gamma_seq}
    M_n \gtrapprox \,M_n\big(\log(\ldots\log(\log(n))\ldots )\big)^{-1}, \quad n \to +\infty,
\end{equation}
and both sequences in \eqref{eq:gamma_seq} belong to $\mathcal{G}_\delta,$ with natural logarithm being composed $k$-times. For fixed $k,$ denote the sequence on the right side of \eqref{eq:gamma_seq} by  $\bar{M}^k=(\bar{M}^k_n)_{n=1}^{\infty},$ and the output of an optimal method $(X^{*,k}_{\diamond}) = \big(X^{*,k}_{\diamond}(t)\big)_{t\in[0,T]}$ in corresponding class $\chi_{\diamond}^{\bar{M}^{k}}, \ \diamond \in \{eq, noneq\},$ respectively. For every $k\in \mathbb{N},$ $X^{*,k}_{eq}$ is a truncated dimension Euler scheme based on equidistant mesh, while $X^{*,k}_{noneq}$ is a truncated dimension Euler scheme with adaptive path-independent step-size control \eqref{eq:step_size}.

As a result, we have the following relation between the exact asymptotic behaviour of the method errors
\begin{equation*}
\begin{split}
    \big(\cost(\xalg)\big)^{1/2}\,\big\| X - \xalg\big\|_2  &\gtrapprox \,\big(\bar{M}^k_n\big)^{1/2}\,\mathcal{C}_{\diamond}\\
    & \approx \big(\cost(X^{*,k}_{\diamond})\big)^{1/2}\,\big\| X - X^{*,k}_{\diamond}\big\|_2, \quad k\geq k_0^{\bar{M}}, \quad n\to+\infty,
    \end{split}
\end{equation*}
holding for all $\xalg\in \chi_{\diamond}^{\bar{M}}.$ Since the constructed sequence satisfies $\bar{M}^k_n = o(M_n), \ n\to +\infty, $ this concludes the proof.
\end{proof}
\smallskip\vspace{-0.1cm}

\begin{remark}
Due to leveraging the alternative truncation sequence as per \eqref{eq:gamma_seq}, the asymptotic benefit is at least of order $\left(M_n / \bar{M}^k_n\right)^{1/2},$ which is unbounded and diverges to infinity, as $n\to+\infty$. In particular, this ratio can be significant also for smaller values of $n,$ especially for sequences $\delta$ converging relatively slowly to zero.
\end{remark}

\begin{remark}
In class $\chi_{noneq}$ we consider only algorithms which are non-adaptive with respect to the number of leveraged Wiener process coordinates. In particular, this results in lower bound for cost times error terms in \eqref{th:AC_noneq}, \eqref{th:AC_eq} diverge to infinity, as the cost rises. On the other hand, this is not the case for finite-dimensional noise structure, see e.g., \cite{AKPP}, \cite{PP2}. We conjecture that this term can be significantly lowered when additional adaptation with respect to Wiener process coordinates is introduced. We plan to investigate such algorithms in our future work. 
\end{remark}

\section{Numerical experiments and implementation issues in Python}\label{sec:experiments}

\subsection{Solver implementation in Numba}
\phantom{a}\vspace{0.17cm}\newline
In this section we exhibit alternative implementation of $\xstep,$ which leverages Numba compiler in Python. This enables us to execute specified functions, called kernels, on the GPU (Graphics Processing Unit) device which supports CUDA API developed by NVIDIA. For more details on parallel computing and kernel execution for a specified grid of blocks and threads, we refer to \cite{AKAPhD, PPMSLS}. Due to the fact that truncated dimension Euler scheme can be executed independently on each thread, we would expect significant decrease of the computation time when compared to the similar calculations on CPU (Central Processing Unit). This is beneficial especially when large number of the trajectories should be simulated. 

The crucial part of the code responsible for the algorithm $\xstep$ execution on GPU, together with relevant comments, can be found in the listing below.
\smallskip

\begin{lstlisting}[language=Python, caption=Truncated dimension Euler algorithm with adaptive step-size - crucial part of the code using Numba compiler. , basicstyle=\ttfamily\tiny]
# this decorator allows to execute the kernel on GPU device
@cuda.jit
def method_step_size(STATES, n, M_n, x0, timegrid, K, w_incr, out=[[]]):
    '''
    Simulate NUM_THREADS different trajectories by method X^{step}_{M_n,k_n} for fixed n and M_n.
    
    Parameters:
    STATES:                          initialised random number generator states 
                                     (separate for each single thread)
    n, M_n:                          algorithm parameters (n is not explicitely used here)
    timegrid:                        vector storing utilised scheme mesh 
                                     (based on step-size and derived outside of this function)
    K:                               integer; number of trajectories to be simulated
    out:                             (NUM_THREADS x k_n) - shaped array on GPU where 
                                     the simulated trajectories are stored
    '''
    
    # get thread info and the total number of threads in the one-dimensional grid, consisting of BLOCKS number of blocks and THREADS_PER_BLOCK # number of threads per each block
    start_thread_id = cuda.grid(1)
    stride = cuda.gridsize(1) 

    # each thread simulates trajectories with indices having specified remainder when divided by stride (i% stride)
    for i in range(start_thread_id, K+1, stride):
        
        # initialize the time moment for particular thread
        t = 0.0
        time_index = 0
        
        # initialize starting point
        x_step = x0

        # out[start_thread_id][:] stores the latest trajectory simulated by thread with index 'start_thread_id' within the grid. While keeping all K trajectories is also possible, this might cause memory issues for large K
        out[start_thread_id][0] = x0

        # initialize pivot variable storing intermediate scalar Wiener process' increment
        dW_1 = 0.0
        
        while t < T: 
            h = timegrid[time_index + 1] - timegrid[time_index]

            # initialize pivot variables needed for the diffusion-related term calculation
            single_sig_increment = 0.0
            while wiener_coord_index = 0
                
            while wiener_coord_index < int(M_n):
                # generate scalar float64 increments of the M_n - dimensional Wiener process 
                # we use xoroshiro128p_normal random number generator
                dW_1 = xoroshiro128p_normal_float64(STATES, start_thread_id) * sqrt(h) 
                single_sig_increment += sigma(t, wiener_coord_index + 1) * dW_1
                wiener_coord_index += 1

            # calculate single step of X^{step}_{M_n,k_n}
            x_step = x_step + a(t, x_step) * h + single_sig_increment

            # update entries of the vectors storing the values of the approximate trajectories
            out[start_thread_id][time_index + 1] = x_step

            # update time point
            time_index +=1
            t = timegrid[time_index]

\end{lstlisting}
\subsection{Results of numerical experiments in Python}
\phantom{a}\vspace{0.17cm}\newline
In this section, we present results of numerical experiments performed on CPU by using Multiprocessing library in Python programming language. We analyse the asymptotic error behaviour for both algorithms $\xstep$ and $X_{M_n^*,n}^{Eq}$ by verifying if the ratio between constants $\mathcal{C}_{noneq}$ and $\mathcal{C}_{eq},$ appearing in Theorem \ref{th:AC_lower}, is attained.

To this end, we consider the following parameters: $T =1.5, \ x_0 = 0.9,$ and the equation coefficients
\begin{equation}\label{eq:sim_a}
    a(t,x) = (t+2)(x-1), \quad t\in [0,T], \ x\in\mathbb{R},
\end{equation}

\begin{equation}\label{eq:sim_sig}
    \sigma_k(t) = \frac{e^{2t} + 2}{(k+1)^{p}\sqrt{\log(k+1)}}\quad t\in [0,T], \ k = 1,2,\ldots,
\end{equation}
where $p>1/2.$
One can show that for all $l\in \mathbb{N}, \ l > 1$ it holds
\begin{equation*}
    \|\sigma(t) - P_l(t)\|_{\ell^2}^2 = \left|\sum_{k=l}^{+\infty} \frac{(e^{2t}+ 2)^2}{(k+1)^{2p}\log(k+1)}\right|\leq (e^{2T} + 2)^2\left|\int\limits_{l}^{+\infty}\frac{1}{(x+1)^{2p}\log(x+1)}\rd x\right|.
\end{equation*}
By substituting $v = (2p-1)\log(x+1),$ we arrive at
\begin{equation}\label{eq:gamma_integral}
    \|\sigma(t) - P_l(t)\|_{\ell^2}^2 \leq (e^{2T} + 2)^2\left|\,\int\limits_{(2p-1)\log(l+1)}^{+\infty}e^{-v}\,v^{-1}\rd v\right|,
\end{equation}
and the integral appearing in \eqref{eq:gamma_integral} is equal to the upper incomplete gamma function \linebreak $\Gamma\big(1, (2p-1)\log(l+1)\big).$ Since for all $s\in \mathbb{N}, x>0$ we have
\begin{equation}\label{eq:incomplete_gamma}
    \Gamma(s,x) = (s-1)!\,\,e^{-x}\sum_{k=0}^{s-1}\frac{x^k}{k!},
\end{equation}
combining \eqref{eq:gamma_integral} and \eqref{eq:incomplete_gamma} yields
\begin{equation*}
    \|\sigma(t) - P_l(t)\|_{\ell^2} \leq (e^{2T} + 2)e^{-0.5(2p-1)\log(l+1)} = (e^{2T} + 2)\big(l+1\big)^{1/2-p}.
\end{equation*}
Therefore, we can assume $\delta(n) \approx n^{1/2-p},$ which implies $\delta^{-1}(n^{-1/2})\approx n^{\frac{1}{2p-1}}.$ In our simulations, we set $p = 0.9,$ hence $M_n \gtrapprox n^{5/4 + \varepsilon}\in \mathcal{G}_\delta$ for all $\varepsilon > 0.$ We decide to choose $\varepsilon = 0.03.$ Consequently, it suffices to take $M_n = \Theta(n^{1.28}),$ $n \to +\infty$.

Now we provide the values of the constants $\mathcal{C}_{noneq}$ and $\mathcal{C}_{eq}$ in our model. First, for all $t\in [0,T]$ we have that
$$\|\sigma(t)\|_{\ell^2} = \gamma(e^{2t} + 2)\quad
\textrm{with}\quad \gamma = 0.75638883...$$
Finally, the constant appearing in lower bounds for $\chi_{eq}$ is equal to
\begin{equation*}
    \sqrt{\frac{T}{6}}\big(0.25e^{4T} + 2e^{2T} - 9/4 + 4T\big)^{1/2}\gamma = 4.55058060...,
\end{equation*}
while in $\chi_{noneq}$
\begin{equation*}
    \sqrt{\frac{1}{6}}\big(0.5e^{2T} - 0.5 + 2T\big)\gamma = 3.87313729... .
\end{equation*}

Since the analytical form of the unique solution to the equation \eqref{main_equation} with coefficients as per \eqref{eq:sim_a} and \eqref{eq:sim_sig} is not known, for each $X^{alg}\in \{X_{M_n^*,k_n^*}^{Eq*}, \xstep\}$ we execute in parallel the algorithm $X_{W_{ratio}\cdot M_n^*,n^*}^{Eq*}$ based on equidistant mesh with $n^* = 10^6$ nodes and first $W_{ratio}\cdot M_n^*$ coordinates of the countably dimensional Wiener process $W$. Let us denote the corresponding process by $X_{M_n^*,n^*}.$ The method error, $\textrm{err}_K(X^{alg}),$ is estimated by simulating $K$ trajectories of the underlying processes. We measure the difference between each pair of trajectories by using the composite Simpson quadrature $Q$ based on time points for which $X^{alg}$ is evaluated, together with the midpoints of the corresponding subintervals. To summarise, we take
\begin{equation}\label{eq:simpson}
\textrm{err}_K(X^{alg}) := \biggr(\frac{1}{K}\sum_{l=1}^{K}Q\Big(|X^{alg}_l(a,b,W^{(l)}) - X_{W_{ratio} M_n^*,n^*,l}\,(a,b,W^{(l)})|^2\Big)\biggr)^{1/2},
\end{equation}
where $X^{alg}_l,$ $X_{W_{ratio}\cdot M_n^*,n^*,l},$ and $W^{(l)}$ are the $l$--th generated trajectories of the corresponding processes. Finally, we compare empirical improvement ratio $\textrm{err}_K(\xstep) / \textrm{err}_K(X_{M_n^*,k_n^*}^{Eq*})$ with the theoretical value $\mathcal{C}_{noneq} / \mathcal{C}_{eq} \simeq 0.85113723.$ The testing results are exhibited in Table \ref{tab:experiments}.
\begin{center}
\scriptsize{
\begin{table}[ht]
\begin{tabular}{|c|c|c|c|c|c|}
\hline
 $n$ & $k_n^*$ & $M_n^*$ & $K$ & $W_{ratio}$ & Improvement ratio \\
 \hline
\hline
   1000 & 7832 & 1037 & 1000 & 2.0 & 0.977977 \\
 \hline
   2000 & 15686 & 2520 & 1000 & 2.0 & 0.945800 \\
 \hline
   5000 & 39249 & 8142 & 250 & 1.5 & 0.915620 \\
 \hline
   10000 & 78520 & 19773 & 94 & 1.5 & 0.976337 \\
 \hline
\end{tabular}
\caption{Simulation results for $X_{M_n^*,n}^{Eq*}$ and $\xstep.$}
\label{tab:experiments}
\end{table}
}
\end{center}
\FloatBarrier
\vspace{-1.6cm}
The average improvement from leveraging adaptive mesh is generally visible. For $n\in\{5000, \linebreak 10000\}$ we executed smaller number of trajectories due to high complexity and time consumption. We also note that the impact of leveraging \eqref{eq:simpson}, Monte Carlo simulation, and rare-fine mesh comparison as an approximation of the method error is not quantified. Nevertheless, the obtained ratios are roughly aligned with the expected asymptotic error behaviour.
\section{Conclusions}
We investigated global approximation of SDEs driven by countably dimensional Wiener process, where the diffusion term depends only on the time variable. For fixed sequence $\delta,$ modelling level of decay for the diffusion term, we derived lower bounds for asymptotic error in suitable classes of algorithms leveraging specified truncation levels of the Wiener process. In particular, we quantified asymptotic benefit from leveraging step-size control instead of equidistant mesh. We also constructed two truncated dimension Euler schemes which are the (almost) optimal algorithms in the respective classes. Our results indicate that the decrease of method error requires significant increase of the cost term, which is illustrated by the product of cost and minimal error diverging to infinity. Nonetheless, we conjecture that the estimates might be beaten in case we allow for additional adaptation with respect to different Wiener process coordinates. 
\section{Appendix}
\begin{fact}\label{fact:integral}
    Let $\sigma$ satisfy (S1)-(S3) with a sequence $\delta$. Then, for every $\bar{M} = (M_n)_{n=1}^\infty \in \mathcal{G}_\delta$ and an arbitrary sequence of partitions $(\Delta_n)_{n=1}^{\infty}$ with
    \begin{equation}\label{arbitary_partition_fact}
        \Delta_{n}: \quad 0=t_{0,n} < t_{1,n} < \ldots < t_{\bar{k}_n-1,n} < t_{\bar{k}_n,n} = T
    \end{equation}
    with $\bar{k}_n \in \mathbb{N},$ we have
    \begin{equation}\label{eq:conv_to_int}
        \biggr|\sum_{j=0}^{\bar{k}_n -1}\|\sigma^{M_n}(t_{j,n})\|_{\ell^2} \Delta t_{j,n} - \int_{0}^T \|\sigma(t)\|_{\ell^2}\rd t\biggr| \leq C_2 T\bigg(\max\limits_{0\leq j \leq \bar{k}_n-1}(\Delta t_{j,n}) + \delta(M_n)\bigg),
    \end{equation}
    where $\Delta t_{j,n} = t_{j+1,n} - t_{j,n}, \ n\in \mathbb{N}, \ j =0,\ldots, \bar{k}_n-1.$
\end{fact}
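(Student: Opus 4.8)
The plan is to bound the quantity by splitting it into two contributions via the triangle inequality: a \emph{truncation} error, arising from replacing $\sigma$ by its projection $\sigma^{M_n} = P_{M_n}\sigma$, and a \emph{quadrature} error, arising from approximating the integral by a left-endpoint Riemann sum. Concretely, I would insert the intermediate sum $\sum_{j=0}^{\bar{k}_n-1}\|\sigma(t_{j,n})\|_{\ell^2}\Delta t_{j,n}$ and write
\begin{align*}
\biggl|\sum_{j=0}^{\bar{k}_n -1}\|\sigma^{M_n}(t_{j,n})\|_{\ell^2}\Delta t_{j,n} - \int_{0}^T \|\sigma(t)\|_{\ell^2}\rd t\biggr|
&\leq \sum_{j=0}^{\bar{k}_n-1}\Bigl|\|\sigma^{M_n}(t_{j,n})\|_{\ell^2} - \|\sigma(t_{j,n})\|_{\ell^2}\Bigr|\Delta t_{j,n} \\
&\quad + \biggl|\sum_{j=0}^{\bar{k}_n-1}\|\sigma(t_{j,n})\|_{\ell^2}\Delta t_{j,n} - \int_{0}^T \|\sigma(t)\|_{\ell^2}\rd t\biggr|,
\end{align*}
and then estimate the two terms separately.

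For the first (truncation) term, the key tool is the reverse triangle inequality for the $\ell^2$-norm, which gives $\Bigl|\|\sigma^{M_n}(t_{j,n})\|_{\ell^2} - \|\sigma(t_{j,n})\|_{\ell^2}\Bigr| \leq \|\sigma(t_{j,n}) - P_{M_n}\sigma(t_{j,n})\|_{\ell^2} \leq C_2\delta(M_n)$ by (S3), uniformly in $j$. Summing against $\Delta t_{j,n}$ and using $\sum_{j}\Delta t_{j,n} = T$ bounds this term by $C_2 T\delta(M_n)$.

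For the second (quadrature) term, I would write the integral as $\sum_{j}\int_{t_{j,n}}^{t_{j+1,n}}\|\sigma(t)\|_{\ell^2}\rd t$ so that each summand pairs with $\|\sigma(t_{j,n})\|_{\ell^2}\Delta t_{j,n}$, bringing the difference under a single integral over $[t_{j,n},t_{j+1,n}]$. Invoking the reverse triangle inequality followed by the Lipschitz bound (S2) gives $\Bigl|\|\sigma(t_{j,n})\|_{\ell^2} - \|\sigma(t)\|_{\ell^2}\Bigr| \leq \|\sigma(t_{j,n}) - \sigma(t)\|_{\ell^2} \leq C_2|t - t_{j,n}| \leq C_2\max_{0\leq j\leq \bar{k}_n-1}\Delta t_{j,n}$ for $t\in[t_{j,n}, t_{j+1,n}]$. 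Integrating and summing bounds this term by $C_2 T\max_j\Delta t_{j,n}$, and adding the two estimates yields exactly the claimed inequality.

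The argument is essentially routine and presents no serious obstacle. The one point deserving care is that the bound must hold \emph{uniformly} over all admissible partitions and truncation sequences, with no hidden dependence on $n$ beyond the two explicit quantities $\max_j\Delta t_{j,n}$ and $\delta(M_n)$; I would therefore make sure the only ingredients used are (S2), (S3), and the identity $\sum_j\Delta t_{j,n} = T$ (in particular the membership $\bar{M}\in\mathcal{G}_\delta$ plays no role in the inequality itself). The reverse triangle inequality for the $\ell^2$-norm is the small observation that makes both terms tractable, since it reduces differences of norms to norms of differences, to which (S2) and (S3) apply directly.
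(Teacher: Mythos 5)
Your proposal is correct and follows essentially the same route as the paper: the same decomposition via the intermediate sum $\sum_j\|\sigma(t_{j,n})\|_{\ell^2}\Delta t_{j,n}$, with (S3) plus the reverse triangle inequality handling the truncation term and (S2) handling the quadrature term (the paper phrases the latter through the mean value theorem for integrals rather than integrating the pointwise bound directly, a cosmetic difference). Note the paper's proof contains a typo in citing (S2) for the truncation estimate where (S3) is the assumption actually used, exactly as you use it.
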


\begin{proof}[{\bf Proof of Fact 2.}]
Let us fix $n\in\mathbb{N}.$ Since 
\begin{equation}\label{fact:integral_proof_1}
\begin{split}
    \biggr|\sum_{j=0}^{\bar{k}_n - 1}\|\sigma^{M_n}(t_{j,n})\|_{\ell^2}\,\Delta t_{j,n} - \int_{0}^T \|\sigma(t)\|_{\ell^2}\rd t\biggr| &\leq \biggr|\sum_{j=0}^{\bar{k}_n - 1}\bigg(\|\sigma(t_{j,n})\|_{\ell^2} - \|\sigma^{M_n}(t_{j,n})\|_{\ell^2}\bigg) \Delta t_{j,n}\biggr| \\ 
    & \ + \biggr|\sum_{j=0}^{\bar{k}_n -1}\|\sigma(t_{j,n})\|_{\ell^2} \Delta t_{j,n} - \int_{0}^T \|\sigma(t)\|_{\ell^2}\rd t\biggr|,
\end{split}
\end{equation}
we split the proof into two parts, by estimating each term in 
\eqref{fact:integral_proof_1} separately.

First, by the property (S2), we obtain
\begin{equation}\label{fact:integral_proof_2}
\biggr|\sum_{j=0}^{\bar{k}_n - 1}\big(\|\sigma(t_{j,n})\|_{\ell^2} - \|\sigma^{M_n}(t_{j,n})\|_{\ell^2}\big) \Delta t_{j,n}\biggr| \leq C_2T\delta(M_n).
\end{equation}
Second, by applying $n$ times the mean value theorem for integrals we have
\begin{equation}\label{fact:integral_proof_3}
        \int_{0}^{T}\|\sigma(t)\|_{\ell^2}\rd t = \sum_{j=0}^{\bar{k}_n -1}\|\sigma(\xi_{j,n})\|_{\ell^2}\,\Delta t_{j,n}, \quad \xi_{j,n} \in (t_{j,n}, t_{j+1,n}).
\end{equation}
Therefore, by (S2) and \eqref{fact:integral_proof_3}, it holds
\begin{equation}\label{fact:integral_proof_4}
    \begin{split}
        \biggr|\sum_{j=0}^{\bar{k}_n-1}\|\sigma(t_{j,n})\|_{\ell^2} \Delta t_{j,n} - \int_{0}^T \|\sigma(t)\|_{\ell^2}\rd t\biggr|
        & \leq \sum_{j=0}^{\bar{k}_n -1}\Big|\|\sigma(\xi_{j,n})\|_{\ell^2} - \|\sigma(t_{j,n})\|_{\ell^2}\Big|\Delta t_{j,n} \\
        & \leq C_2 T \max\limits_{0 \leq j \leq \bar{k}_n-1}\,\Delta t_{j,n}.
    \end{split}
\end{equation}
Now, the assertion of the fact follows from \eqref{fact:integral_proof_1}, \eqref{fact:integral_proof_2}, and \eqref{fact:integral_proof_4}.
\end{proof}

\noindent\textbf{Proof of Proposition \ref{prop:findim}.}
For fixed $M,n \in \mathbb{N},$ we use the following decomposition of the process $X$ and the truncated dimension time-continuous Euler process
 \begin{equation*}
     X^M(t) = x_0 + A_M(t) + B^M(t),
 \end{equation*}
 \begin{equation*}
     \euler(t) = x_0 + \tilde{A}_{M,n}(t) + \tilde{B}_{n}^M(t),
 \end{equation*}
 where
 \begin{equation*}
     A_{M}(t) = \int\limits_{0}^t \sum_{j=0}^{k_n-1} a(s, X^M(s))\one_{(t_{j,n}, t_{j+1,n}]}(s)\rd s,
 \end{equation*}
 \begin{equation*}
     B^{M}(t) = \int\limits_{0}^t \sum_{j=0}^{k_n-1} \sigma^M(s)\one_{(t_{j,n}, t_{j+1,n}]}(s)\rd W(s), 
 \end{equation*}
 and
 \begin{equation*}
     \tilde{A}_{M,n}(t) = \int\limits_{0}^t \sum_{j=0}^{k_n-1} a(t_{j,n}, \euler(t_{j,n}))\one_{(t_{j,n}, t_{j+1,n}]}(s)\rd s,
 \end{equation*}
 \begin{equation*}
     \tilde{B}_{n}^M(t) = \int\limits_{0}^t \sum_{j=0}^{k_n-1} \sigma^M(t_{j,n})\one_{(t_{j,n}, t_{j+1,n}]}(s)\rd W(s).
 \end{equation*}
 Therefore,
 \begin{equation}\label{eq:euler_a}
     \mathbb{E}|A_M(t) - \tilde{A}_{M,n}(t)|^2 \leq 3\Bigl(\mathbb{E}\bigl|\tilde{A}_{n,1}^{E, M}(t)\bigr|^2 + \mathbb{E}\bigl|\tilde{A}_{n,2}^{E, M}(t)\bigr|^2 + \mathbb{E}\bigl|\tilde{A}_{n,3}^{E, M}(t)\bigr|^2\Bigr)
 \end{equation}
 with
 \begin{equation*}
     \mathbb{E}\bigl|\tilde{A}_{n,1}^{E, M}(t)\bigr|^2 = \mathbb{E}\biggr|\int\limits_{0}^t \sum_{j=0}^{k_n-1}\big(a(s, X^M(s) - a(t_{j,n},X^M(s))\big)\one_{(t_{j,n}, t_{j+1,n}]}(s)\rd s\biggr|^2,
 \end{equation*}
 \begin{equation*}
     \mathbb{E}\bigl|\tilde{A}_{n,2}^{E, M}(t)\bigr|^2 = \mathbb{E}\biggr|\int\limits_{0}^t \sum_{j=0}^{k_n-1}\big(a(t_{j,n}, X^M(s) - a(t_{j,n},X^M(t_{j,n}))\big)\one_{(t_{j,n}, t_{j+1,n}]}(s)\rd s\biggr|^2,
 \end{equation*}
 \begin{equation*}
     \mathbb{E}\bigl|\tilde{A}_{n,3}^{E, M}(t)\bigr|^2 = \mathbb{E}\biggr|\int\limits_{0}^t \sum_{j=0}^{k_n-1}\big(a(t_{j,n},X^M(t_{j,n}) - a(t_{j,n},\euler(t_j))\big)\one_{(t_{j,n}, t_{j+1,n}]}(s)\rd s\biggr|^2.
 \end{equation*}
 From the H\"{o}lder inequality and Lemma \ref{lemma_scheme_bound} we have that
 \begin{equation}\label{eq:euler_a_1}
     \mathbb{E}\bigl|\tilde{A}_{n,1}^{E, M}(t)\bigr|^2 \leq T \sum_{j=0}^{k_n-1}\int_{t_{j,n}}^{t_{j+1,n}}|s-t_{j,n}|^2\,\mathbb{E}\big(1+ |X^M(s)|\big)^2 \rd s \leq C \max\limits_{0\leq j \leq k_n-1}{(\Delta t_{j,n})^2}.
 \end{equation}
By Corollary 14.2.9 in \cite{CohEl} we get that
\begin{equation*}
\begin{split}
     a\big(t_{j,n}, X^M(s)\big) - a\big(t_{j,n}, X^M(t_{j,n})\big)& =\int_{t_{j,n}}^s \bigg[\frac{\partial  a}{\partial x}\big(t_{j,n}, X^M(u)\big)\cdot a\big(u, X^M(u)\big) \\
     & \quad + \frac{1}{2}\frac{\partial^2 a}{\partial x^2}\big(t_{j,n}, X^M(u)\big)\cdot\|\sigma^M(u)\|^2_{\ell^2}\bigg]\rd u \\ 
     & \quad + \int_{t_{j,n}}^s \frac{\partial  a}{\partial x}\big(t_{j,n}, X^M(u)\big)\sigma^M(u)\rd W(u).
\end{split}
\end{equation*}
For brevity, we introduce the following notation for $u \in (t_{j,n},t_{j+1,n}]$
\begin{equation*}
\begin{split}
     \alpha_{M,j}(u) = \frac{\partial  a}{\partial x}\big(t_{j,n}, X^M(u))\cdot a\big(u, X^M(u)\big) + \frac{1}{2}\frac{\partial^2 a}{\partial x^2}\big(t_{j,n}, X^M(u)\big)\cdot\|\sigma^M(u)\|^2_{\ell^2}\bigg]\rd u \in \mathbb{R},
\end{split}
\end{equation*}

\begin{equation}\label{eq:euler_a_2}
\begin{split}
    \beta_j^M(u) &= \frac{\partial  a}{\partial x}(t_{j,n}, X^M(u))\cdot \sigma^M(u) \\
    & = \bigg[\frac{\partial  a}{\partial x}\big(t_{j,n}, X^M(u)\big)\cdot \sigma_1(u), \ldots, \frac{\partial  a}{\partial x}\big(t_{j,n}, X^M(u)\big)\cdot \sigma_M(u), 0,0, \ldots\bigg] \in \mathbb{R}^{1 \times \infty},
    \end{split}
\end{equation}
and
\begin{equation*}
    \mathds{1}_{j,n}(u) = \mathds{1}_{(t_{j,n}, t_{j+1,n}]}(u), \quad u \in [0,T], \ j=0,\ldots, k_n-1.
\end{equation*}
Therefore, we have for all $t \in [0,T]$ that
\begin{equation}\label{eq:euler_2_split}
     \mathbb{E}\bigl|\tilde{A}_{n,2}^{E, M}(t)\bigr|^2 \leq 2\Big(\mathbb{E}\bigl|\tilde{A}_{n,21}^{E, M}(t)\bigr|^2 + \mathbb{E}\bigl|\tilde{A}_{n,22}^{E, M}(t)\bigr|^2 \Big),
 \end{equation}
where
\begin{equation*}
     \mathbb{E}\bigl|\tilde{A}_{n,21}^{E, M}(t)\bigr|^2 = \mathbb{E} \biggr|\int_0^t \,\sum_{j=0}^{k_n-1}\biggr(\int_{t_j}^s\alpha_{M,j}(u)\rd u\biggr)\cdot\mathds{1}_{j,n}(s)\rd s \biggr|^2,
\end{equation*}
\begin{equation*}
     \mathbb{E}\bigl|\tilde{A}_{n,22}^{E, M}(t)\bigr|^2 = \mathbb{E} \biggr|\int_0^t \,\sum_{j=0}^{k_n-1}\biggr(\int_{t_j}^s\beta_{j}^M(u)\rd W(u)\biggr)\cdot\mathds{1}_{j,n}(s)\rd s \biggr|^2.
\end{equation*}
Moreover, by the H\"{o}lder inequality, \eqref{eq:bounded_derivatives} and Lemma \ref{lemma_sol_bound} we obtain for $t \in [0,T]$ that
\begin{equation}\label{eq:euler_a_21_bound}
\begin{split}
     \hspace{-0.6cm}\mathbb{E}\bigl|\tilde{A}_{n,21}^{E, M}(t)\bigr|^2 & \\ &\hspace{-1.6cm}\leq C \sum_{j=0}^{k_n-1}\int_{t_{j,n}}^{t_{j+1,n}} \biggr[(s-t_{j,n})\int_{t_{j,n}}^s \Big(1 + \mathbb{E}|X^M(u)|^2 + \|\sigma^M(u) - \sigma^M(0)\|_{\ell^2}^4 + \|\sigma^M(0)\|_{\ell^2}^4\Big)\rd u\biggr] \rd s \\
     & \hspace{-1.6cm}\leq C\max\limits_{0\leq j \leq k_n-1}(\Delta t_{j,n})^2.
     \end{split}
\end{equation}
By the Fubini theorem and the fact that It\^{o} integrals defined on disjoint intervals are uncorrelated, for $t \in [0,T]$ we get that
\begin{equation*}
\begin{split}
     \hspace{-0.5cm}\mathbb{E}\bigl|\tilde{A}_{n,22}^{E, M}(t)\bigr|^2 & = \sum_{j=0}^{k_n-1}\int_{0}^{t}\int_{0}^{t}\mathbb{E}\bigg[\int_{t_{j,n}}^{s_1}\beta_j^M(u)\rd W(u) \cdot \int_{t_{j,n}}^{s_2}\beta_j^M(u)\rd W(u)\bigg]\cdot\mathds{1}_{j,n}(s_1)\,\mathds{1}_{j,n}(s_2)\rd s_1\rd s_2 \\
     &\hspace{-0.6cm}+ \sum_{j,l=0; l\neq j}^{k_n-1}\int_{0}^{t}\int_{0}^{t}\mathbb{E}\bigg[\int_{t_l}^{s_1}\beta_l^M(u)\rd W(u) \cdot \int_{t_j}^{s_2}\beta_j^M(u)\rd W(u)\bigg]\cdot\mathds{1}_{l,n}(s_1)\,\mathds{1}_{j,n}(s_2)\rd s_1\rd s_2 \\
     &\hspace{-0.6cm}= \sum_{j=0}^{k_n-1}\int_{t_{j,n}}^{t_{j+1,n}}\int_{t_{j,n}}^{t_{j+1,n}}\mathbb{E}\bigg[\int_{t_{j,n}}^{s_1}\beta_j^M(u)\rd W(u) \cdot \int_{t_{j,n}}^{s_2}\beta_j^M(u)\rd W(u)\bigg]\rd s_1\rd s_2.
     \end{split}
\end{equation*}
From Theorem 88 (iii) in \cite{situ}, \eqref{eq:bounded_derivatives}, and \eqref{eq:euler_a_2} we obtain
\begin{equation}\label{eq:euler_a_22}
    \begin{split}
     \mathbb{E}\bigl|\tilde{A}_{n,22}^{E, M}(t)\bigr|^2 &= \sum_{j=0}^{k_n-1}\,\int_{t_{j,n}}^{t_{j+1,n}}\int_{t_{j,n}}^{t_{j+1,n}}\mathbb{E}\bigg|\int_{t_{j,n}}^{s_1 \wedge s_2}\beta_j^M(u)\rd W(u) \bigg|^{\textcolor{red}{2}}\rd s_1\rd s_2 \\
     & = \sum_{j=0}^{k_n-1}\int_{t_{j,n}}^{t_{j+1,n}}\int_{t_{j,n}}^{t_{j+1,n}}\mathbb{E}\int_{t_{j,n}}^{s_1 \wedge s_2}\|\beta_j^M(u)\|^2_{\ell^2}\rd u \rd s_1\rd s_2 \\
     &\leq C\max\limits_{0\leq j \leq k_n-1}(\Delta t_{j,n})^2.
     \end{split}
\end{equation}
Combining \eqref{eq:euler_2_split}, \eqref{eq:euler_a_21_bound}, and \eqref{eq:euler_a_22} yields
\begin{equation}\label{eq:euler_a_2final}
    \mathbb{E}\bigl|\tilde{A}_{n,2}^{E, M}(t)\bigr|^2 \leq C\max\limits_{0\leq j \leq k_n-1}(\Delta t_{j,n})^2.
\end{equation}
In addition, we estimate the term
\begin{equation}\label{eq:euler_a_3}
     \mathbb{E}\bigl|\tilde{A}_{n,3}^{E, M}\bigr|^2 \leq T \int_0^t \sum_{j=0}^{k_n-1}\,\mathbb{E}|X^M(t_{j,n}) - \euler(t_{j,n})|^2\,\cdot\mathds{1}_{j,n}(s) \rd s.
 \end{equation}
Finally, by \eqref{eq:euler_a}, \eqref{eq:euler_a_1}, \eqref{eq:euler_a_2final}, and \eqref{eq:euler_a_3}, for all $t\in [0,T]$ it holds
\begin{equation}\label{eq:euler_a_final}
     \mathbb{E}|A_M(t) - \tilde{A}_{M,n}(t)|^2 \leq D_1\max\limits_{0\leq j \leq k_n-1}(\Delta t_{j,n})^2 + D_2 \int_0^t \sum_{j=0}^{k_n-1}\mathbb{E}|X^M(t_{j,n}) - \euler(t_{j,n})|^2\cdot\mathds{1}_{j,n}(s) \rd s, 
 \end{equation}
where the constants $D_1, D_2$ do not depend on the parameters $M,n.$

Now we estimate the diffusion-related term. By (S2) we have for $t\in [0,T]$ that
 \begin{equation}\label{eq:euler_b_final}
     \begin{split}
         \mathbb{E}|B^M(t) - \tilde{B}_{n}^M (t)|^2 & = \sum_{j=0}^{k_n-1}\mathbb{E}\int_0^t \|\sigma^M(s) - \sigma^M(t_{j,n})\|^2_{\ell^2}\cdot\mathds{1}_{j,n}(s)\rd s \\
         & \leq \frac{1}{3} C_2^2\,T\max\limits_{0\leq j \leq k_n-1}(\Delta t_{j,n})^2.
     \end{split}
 \end{equation}
By \eqref{eq:euler_a_final} and \eqref{eq:euler_b_final} we get that for all $t \in [0,T]$
 \begin{equation*}
     \sup_{0\leq s \leq t}\mathbb{E}|X^M(s) - \euler(s)|^2 \leq D_3 \max\limits_{0\leq j \leq k_n-1}(\Delta t_{j,n})^2 + D_4 \int_0^t \sup_{0\leq u \leq s}\mathbb{E}|X^M(u) - \euler(u)|^2\rd s.
 \end{equation*}
 Note that the mapping $\displaystyle{[0,T]\ni t \mapsto  \sup_{0 \leq u \leq t}\mathbb{E}\|X^M(u) - \euler(u)\|^p}$ is Borel (as a non-decreasing function) and bounded. Therefore, the Gr\"onwall's lemma yields
\begin{equation}\label{euler_final_estimate}
		\sup_{0 \leq t \leq T}\mathbb{E}\big|X^M(t) - \euler(t)\big|^2 \leq D_{0}\max\limits_{0\leq j \leq k_n-1}(\Delta t_{j,n})^2,
\end{equation}
where $D_{0}$ does not depend on truncation parameter $M$ and mesh size $k_n$. Now, \eqref{Euler_error_upper_bound} is a direct consequence of \eqref{euler_final_estimate}. \ \ $\square$

\begin{lemma}
\label{lemma_scheme_bound}
	Let $a,\sigma$ satisfy conditions (A1)-(A4) and (S1)-(S3), respectively. There exists $K \in (0,+\infty)$ such that for every $M,n\in\mathbb{N}$ it holds
	\begin{equation*}
		\sup_{0 \leq t \leq T}\mathbb{E}|\euler(t)|^2 \leq K.
	\end{equation*}
\end{lemma}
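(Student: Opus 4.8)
The plan is to recast the time-continuous Euler process in integral form and then run a Gr\"onwall argument, in the spirit of the proof of Proposition \ref{prop:findim}. First I would introduce the left-endpoint step function $\eta_n(s) = t_{j,n}$ for $s \in [t_{j,n}, t_{j+1,n})$ and note that \eqref{continuous_scheme} admits the integral representation
\[
\euler(t) = x_0 + \int_0^t a\big(\eta_n(s), \euler(\eta_n(s))\big)\rd s + \int_0^t \sigma^M(\eta_n(s))\rd W(s), \quad t \in [0,T].
\]
From (A2) and (A3) one gets the linear growth bound $|a(t,x)| \leq C_1(1+|x|)$, while (S1) and (S2) yield $\|\sigma(s)\|_{\ell^2} \leq \|\sigma(0)\|_{\ell^2} + \|\sigma(s)-\sigma(0)\|_{\ell^2} \leq C_2(1+T)$ for all $s \in [0,T]$; since $\sigma^M = P_M\sigma$ is a coordinate truncation, $\|\sigma^M(s)\|_{\ell^2} \leq \|\sigma(s)\|_{\ell^2} \leq C_2(1+T)$ uniformly in $M$.

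Next, using $(u+v+w)^2 \leq 3(u^2+v^2+w^2)$, the H\"older inequality on the drift integral, and the It\^o isometry for the stochastic integral with respect to the countably dimensional process $W$, I would obtain
\[
\mathbb{E}|\euler(t)|^2 \leq 3|x_0|^2 + 3T\int_0^t \mathbb{E}\big|a(\eta_n(s), \euler(\eta_n(s)))\big|^2\rd s + 3\int_0^t \mathbb{E}\|\sigma^M(\eta_n(s))\|_{\ell^2}^2\rd s.
\]
Inserting the linear growth of $a$, the uniform bound on $\|\sigma^M\|_{\ell^2}$, and $(1+|x|)^2 \leq 2(1+|x|^2)$, this collapses to
\[
\mathbb{E}|\euler(t)|^2 \leq C_3 + C_4\int_0^t \mathbb{E}\big|\euler(\eta_n(s))\big|^2\rd s,
\]
with $C_3, C_4$ depending only on $x_0, C_1, C_2, T$, and in particular independent of $M$ and of the partition.

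Since $\eta_n(s) \leq s$, I would bound $\mathbb{E}|\euler(\eta_n(s))|^2 \leq \sup_{0 \leq u \leq s}\mathbb{E}|\euler(u)|^2$, set $g(t) := \sup_{0 \leq u \leq t}\mathbb{E}|\euler(u)|^2$, and take the supremum over $[0,t]$ on the left-hand side, which gives $g(t) \leq C_3 + C_4\int_0^t g(s)\rd s$. The Gr\"onwall lemma then yields $g(T) \leq C_3\, e^{C_4 T} =: K$, the desired bound uniform in both $M$ and $n$.

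The main obstacle is the a priori finiteness of $g$, which is needed to legitimately invoke Gr\"onwall. I would settle this by induction over the grid intervals: on $[0, t_{1,n}]$ the process is an affine function of a Gaussian increment, hence square-integrable, and if $\euler(t_{j,n})$ has finite second moment then $\euler(t)$ is square-integrable on $[t_{j,n}, t_{j+1,n}]$ for the same reason. As $g$ is non-decreasing and therefore Borel, this shows $g(T) < +\infty$ for each fixed $n$ (with a bound that need not be uniform), after which the Gr\"onwall estimate upgrades this to the uniform constant $K$ independent of $M$ and $n$.
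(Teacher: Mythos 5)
Your argument is correct and complete. The paper itself does not write out a proof of this lemma: it simply defers to \cite{PPMSLS}, where the uniform second-moment bound for the truncated time-continuous Euler scheme is established for a more general model (with jumps) in the pointwise setting. Your proposal fills this in with the standard self-contained route: the integral representation via the left-endpoint map $\eta_n$, linear growth of $a$ from (A2)--(A3), the uniform bound $\|\sigma^M(s)\|_{\ell^2}\leq\|\sigma(s)\|_{\ell^2}\leq C_2(1+T)$ from (S1)--(S2) together with the fact that $P_M$ is a contraction, the H\"older inequality plus It\^o isometry for the countably dimensional integral, and Gr\"onwall. You also correctly handle the one point that is often glossed over, namely the a priori finiteness of $g(t)=\sup_{0\leq u\leq t}\mathbb{E}|\euler(u)|^2$ needed to invoke Gr\"onwall, by induction over the grid intervals (on each interval the scheme is affine in a square-integrable random variable and a Gaussian increment), and you note that $g$ is Borel as a non-decreasing function. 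Your constants $C_3,C_4$ visibly depend only on $x_0,C_1,C_2,T$, so the resulting $K$ is uniform in both $M$ and $n$, which is exactly what the lemma asserts. The only thing your write-up buys beyond the paper's citation is self-containedness and an explicit tracking of what the constant depends on; there is no gap.
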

\begin{proof}
This property of the truncated dimension time-continuous Euler scheme has been shown for more generalised model structure in \cite{PPMSLS} for pointwise approximation problem. We refer to this paper for an outline of the proof.
\end{proof}

\end{document}